\newtheorem{theorem}{Theorem}[section]
\newtheorem{lemma}[theorem]{Lemma}
\newtheorem{corollary}[theorem]{Corollary}
\newtheorem{proposition}[theorem]{Proposition}
\theoremstyle{remark}
\newtheorem{rem}[theorem]{Remark}
\theoremstyle{definition}
\newtheorem{definition}[theorem]{Definition}
\def\del{\partial}              
\def\bC{\mathbb C}          
\def\bR{\mathbb R}          
\def\bQ{\mathbb Q}          
\def\bZ{\mathbb Z}          
\def\bT{\mathbb T}          
\def\bP{\mathbb P}
\def\mC{\mathcal{C}}
\def\mF{\mathcal{F}}            
\def\mA{\mathcal{A}}            
\def\mM{{\bf M}}            
\def\mS{\mathcal{S}}            
\def\mL{\mathcal{L}}            
\def\mS{\mathcal{S}}            
\def\mH{\mathcal{H}}            
\def\mN{{\bfN}}
\def\mO{\mathcal{O}}
\def\mK{\mathcal{K}}
\def\mX{\mathcal{X}}
\def\mW{\mathcal{W}}
\def\mAK{\mathcal{AK}}
\def\bUKS{\mbox{ uKs}}
\def\bKS{\mbox{ Ks}}
\def\bfN{\mbox{{\bf N}}}
\def\bfC{\mbox{{\bf C}}}
\def\kt{\mathfrak{t}}
\def\ra{\rightarrow}
\def\vol{dx}
\def\zeta{{\rm A}}
\def\del{\partial}
\def\exv{\zeta}
\def\pot{u}
\def\lab{{\vec{n}}}
\def\labb{{\vec{m}}}
\def\bsigma{{\bf m}}
 \newcommand*{\quot}[2]%
{\ensuremath{%
   \raisebox{.35ex}{\ensuremath{#1}}\big/\raisebox{-.35ex}{\ensuremath{#2}}}}
\begin{document}

\title{A note on extremal toric almost K\"ahler metrics}
\author{Eveline Legendre}
\date{\today}
\address{I.M.T., Universit\'e Paul Sabatier, 31062 Toulouse cedex 09, France }
\email{eveline.legendre@math.univ-toulouse.fr}

\subjclass[2010]{Primary 32Q20; Secondary 53C99}
\keywords{ almost K\"ahler metrics, toric geometry, extremal K\"ahler metric}

\maketitle

\begin{abstract}
An almost K\"ahler structure is {\it extremal} if the Hermitian scalar curvature is a Killing potential~\cite{lejmi}. When the almost complex structure is integrable it coincides with extremal K\"ahler metric in the sense of Calabi~\cite{calabi}. We observe that the existence of an extremal {\it toric} almost K\"ahler structure of involutive type implies uniform K-stability and we point out the existence of a formal solution of the Abreu equation for any angle along the invariant divisor. Applying the recent result of Chen--Cheng~\cite{chencheng3} and He\cite{WHe}, we conclude that the existence of a compatible extremal toric almost K\"ahler structure of involutive type on a compact symplectic toric manifold is equivalent to its relative uniform $K$--stability (in a toric sense). As an application, using \cite{ACGTFproj_curves}, we get the existence of an extremal toric K\"ahler metric in each K\"ahler class of $\bP(\mO\oplus \mO(k_1) \oplus \mO(k_2))$.      
\end{abstract}

\section{Introduction}


The objects and problems of toric K\"ahler geometry have been fruitfully translated in terms of convex affine geometry in the works of Abreu~\cite{abreu}, Guillemin~\cite{guillMET}, Donaldson~\cite{don:scalar}, Apostolov and al.~\cite{H2FII} with important applications in the very hard and central problem of Calabi extremal K\"ahler metrics~\cite{calabi}. In particular, Donaldson used this theory to prove the celebrated Yau--Tian--Donaldson conjecture, \cite{Y,T,don:scalTORICstab}, for toric surfaces with vanishing Futaki invariant in \cite{don:scalTORICstab,don:interior,don:extMcond,don:scalar}. There is a relative version of this conjecture due to Sz\'ekelyhidi~\cite{szekelyhidi} which is more relevant in the presence of symmetries and for general extremal (non constant scalar curvature) K\"ahler metrics. This conjecture predicts that given a complex compact manifold $(M^{2n},J)$ with a K\"ahler class $\Omega$ and a maximal compact torus $T\subset \mbox{Aut}(M,J)$, the existence of an invariant extremal K\"ahler metrics in $\Omega$ is equivalent to the "relative $K$--stability" of $(M^{2n},J,\Omega)$ in a sense to be determined precisely but which would be related to an algebro-geometric notion of stability.\\  

We recall briefly the toric counterpart of this theory, with more details in Section 2, as it was developped by Donaldson~\cite{don:scalTORICstab}. In the toric setting, $(M^{2n},J,\Omega)$ is invariant by a compact torus $T=T^{n}$ and caracterized completely by a convex polytope $P$, open and relatively compact in $\kt^*$, the dual of the Lie algebra $\kt$ of $T$, together with an affine measure $\sigma\in \mM(P)$ on the boundary of $P$. The $K$--stability (relative to $T$) is related to the positivity of a certain functional $$\mL_{(P,\sigma)}(f)= \int_{\partial P} f\sigma -\frac{1}{2}\int_P f\exv_{\sigma} dx$$ on a set $\widetilde{\mC}$ of convex functions $f$ on $P$, see Definition~\ref{defUKS}. In this definition, $dx=dx_1\wedge \cdots\wedge dx_n$ is a Lebesgue measure on $\kt^*\simeq \bR^n$ and $\exv_{\sigma}\in \mbox{Aff}(\kt^*)$ is the {\it extremal 
affine function}, see~\S\ref{SECextVECTORfield}. Following~\cite{don:scalTORICstab,szekelyhidiTHESIS}, if there exists $\lambda > 0$ such that $$\mL_{(P,\sigma)}(f) \geq \lambda \int_{\partial P}  f\sigma $$ for any "normalized" $f$ in  $\widetilde{\mC}$ then $(P,\sigma)$ is uniformly $K$--stable and $K$--stable if $\lambda=0$ is the only possible choice , see Definition~\ref{defUKS}. 

The $K$--stability or uniform $K$--stability only depends on $P$ and $\sigma$ and we define 
\begin{equation} \label{defnKstanbleINTRO}\begin{split}
\bUKS(P)&=\{\sigma\in \mM(\partial P)\,|\,(P,\sigma)\mbox{ is uniformly }K\mbox{--stable}\},\\
\bKS(P)&=\{\sigma\in \mM(\partial P)\,|\,(P,\sigma)\mbox{ is }K\mbox{--stable}\}.
\end{split}
\end{equation} Of course we have $\bUKS(P)\subset \bKS(P)$.

Compatible K\"ahler structures are essentially parametrized by a set of convex functions $\mS(P,\sigma)\subset C^{\infty}(P)$, called symplectic potentials and satisfying some boundary condition, recalled in~\S\ref{subsecSYMpot}, depending on $\sigma$. Given $\pot\in \mS(P,\sigma)$, the associated K\"ahler structure $(g_\pot, J_\pot)$ is {\it extremal} in the sense of Calabi if it satisfies the following so-called {\it Abreu equation}
\begin{equation}\label{eqABREUintro}S(H^\pot)=-\sum_{i,j=1}^n \frac{\partial^2\pot^{ij}}{\partial x_i \partial x_j}\in\mbox{Aff}(\kt^*)\end{equation}
 where $H^\pot =(\pot^{ij})= \left(\frac{\partial^2\pot}{\partial x_i \partial x_j}\right)^{-1}$ is the inverse Hessian of $\pot$ for a flat connection on $\kt^*\simeq \bR^n$.    

 The relative version of the Yau--Tian--Donaldson conjecture for toric manifold is generalized following~\cite{don:scalTORICstab} by the prediction that, given a simple relatively compact polytope $P\subset \bR^n$, one should have \begin{equation}\label{eqCONJ_INTRO}\{\sigma\in \mM(\partial P)\,|\, \exists \pot \in\mS(P,\sigma) \mbox{ such that } S(H^\pot) \in \mbox{Aff}(\kt^*) \} = \bKS(P).\end{equation}  Some experts think that the stability condition must be strenghtened and one of the suggestion, see~\cite{szekelyhidiTHESIS,CLS}, is to conjecture that \begin{equation}\label{eqCONJunifINTRO}\{\sigma\in \mM(\partial P)\,|\, \exists \pot \in\mS(P,\sigma) \mbox{ such that } S(H^\pot) \in \mbox{Aff}(\kt^*)\} = \bUKS(P).\end{equation}

 As we argue in \S\ref{subsectCHENCHENGHE}, by combining Chen--Li--Sheng work~\cite{CLS} and the recent progress of Chen--Cheng \cite{chencheng3} and He~\cite{WHe}, with Donaldson~\cite{don:scalTORICstab} and Zhou--Zhu~\cite{zz} results this conjecture is indeed true.  
 
 \begin{theorem}\label{theoCHENCHENGHEintro} Given any compact convex labelled simple polytope $(P,\sigma)$, 
 \begin{equation} \exists \pot \in\mS(P,\sigma) \mbox{ such that } S(H^\pot) \in \mbox{Aff}(\kt^*)\end{equation} if and only if $(P,\sigma)$ is uniformly $K$--stable (i.e $\sigma \in\bUKS(P)$).
 \end{theorem}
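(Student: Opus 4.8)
The plan is to prove the two implications of the equivalence separately, reading the statement as the convex--geometric incarnation of the relative Yau--Tian--Donaldson correspondence for extremal metrics. For the necessity direction, assume a solution $\pot\in\mS(P,\sigma)$ of the Abreu equation $S(H^\pot)=\exv_\sigma$ exists. I would start from Donaldson's integration--by--parts identity, which in the normalization of this paper reads
$$\mL_{(P,\sigma)}(f)=\frac{1}{2}\int_P \sum_{i,j=1}^n \pot^{ij}\,\frac{\partial^2 f}{\partial x_i\partial x_j}\,dx$$
for every convex $f$ on $P$, once $S(H^\pot)=\exv_\sigma$ is substituted into the general boundary identity. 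Since $H^\pot=(\pot^{ij})$ is positive definite and the Hessian of $f$ is positive semidefinite, the integrand is a trace of a product of such matrices, hence nonnegative, giving $\mL_{(P,\sigma)}(f)\geq 0$ and thus $K$--stability. Upgrading this to the uniform lower bound $\mL_{(P,\sigma)}(f)\geq\lambda\int_{\partial P}f\sigma$ with some $\lambda>0$ is subtler; I would follow Zhou--Zhu and Donaldson, using the Guillemin boundary behaviour of $\pot$ to control the singular part of $H^\pot$ along each facet and to estimate $\int_{\partial P}f\sigma$ against the interior integral. This direction is essentially known and I expect only careful, routine work.

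The sufficiency direction is the substantial one, and I would argue variationally in three steps. First, following Chen--Li--Sheng, I would recast the Abreu equation as the Euler--Lagrange equation of the relative (modified) Mabuchi functional $\mathcal{M}$ on the convex set $\mS(P,\sigma)$, whose first variation in convex directions reproduces $\mL_{(P,\sigma)}$. Second, I would show that uniform $K$--stability, namely $\mL_{(P,\sigma)}(f)\geq\lambda\int_{\partial P}f\sigma$, is equivalent to coercivity (properness) of $\mathcal{M}$ along $\mS(P,\sigma)$, i.e.\ an estimate $\mathcal{M}(\pot)\geq\epsilon\,J(\pot)-C$; this combinatorial--to--analytic bridge is the heart of the matter, and I would derive it from Donaldson's convexity estimates together with the modified $K$--energy computations of Zhou--Zhu, in which the correct normalization is encoded by the extremal affine function $\exv_\sigma$. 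Third, I would invoke the a priori estimates of Chen--Cheng, in the extremal form established by He: properness of the modified Mabuchi functional on the toric K\"ahler class forces the existence of a smooth extremal metric, and reading this back through the symplectic--potential dictionary produces the desired $\pot\in\mS(P,\sigma)$ solving $S(H^\pot)=\exv_\sigma$.

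The main obstacle lies in making the last two steps interact correctly at the boundary. The Chen--Cheng--He estimates are global statements on a genuine compact K\"ahler manifold, whereas $\mS(P,\sigma)$ prescribes a singular behaviour along $\partial P$ fixed by $\sigma$. I must therefore confirm that $\exv_\sigma$ is exactly the normalization for which $\mathcal{M}$ is the right relative functional, so that its coercivity matches uniform stability, and I must control the convexity of the minimizer up to $\partial P$, ruling out loss of regularity or escape of mass to the facets. For labelled polytopes that are not rational, and hence do not arise from an actual toric variety, the manifold results cannot be applied verbatim; instead I would run the continuity method directly on the convex--geometric equation, combining the interior estimates of Donaldson and Chen--Li--Sheng with localized boundary estimates near each facet, and checking that the coercivity constant $\lambda>0$ is preserved along the path while the extremal affine function varies continuously. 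Verifying that these reductions close up is where I expect the genuine difficulty to reside.
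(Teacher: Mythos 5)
Your proposal takes essentially the same route as the paper: the necessity direction is Chen--Li--Sheng's uniform-stability theorem (which the paper simply cites), and the sufficiency direction is argued exactly as in the paper, by combining Donaldson's lower-bound lemma and Zhou--Zhu's coercivity estimate for the toric modified K-energy $\mF_{(P,\sigma)}$ with He's extremal version of the Chen--Cheng existence theorem, read back through the symplectic-potential dictionary. The only divergences are in execution: for non-rational labelled polytopes you propose a continuity method, whereas the paper argues (citing the author's earlier work) that the Chen--Cheng--He estimates localize in the complex charts attached to the vertices of $\overline{P}$ and hence apply verbatim; and the bridge you flag but leave open --- matching toric uniform stability with the properness He requires --- is carried out in the paper by an explicit Legendre-transform computation showing that $\int_P |u|\,dx$ dominates the Darvas distance $d_{1,G_0}$.
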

 In the constant scalar curvature case, that is when $\exv_{(P,\sigma)}$ is a constant, this last statement is Theorem 1.8 of Chen--Cheng in \cite{chencheng3} given that Donaldson showed in \cite[Proposition 5.2.2]{don:scalTORICstab} that uniform K-stability of $(P,\sigma)$ is equivalent to the $L^1$--stability of Chen and Cheng. Theorem~\ref{theoCHENCHENGHEintro} above is an application of He's recent important result~\cite{WHe}.

\begin{rem} To pass from Theorem~\ref{theoCHENCHENGHEintro} to a positive resolution of the relative version of the Yau--Tian--Donaldson conjecture one would need to show that the uniform stability of a labelled polytope is equivalent to the stability with respect to toric degenerations, see Remark~\ref{remTESTconfig}. \end{rem}

 Observe that \eqref{eqABREUintro} is a non-linear $4$--th order PDE problem on $\phi$ but only a linear second order PDE problem on $H^\phi$. Denote  $\mathcal{AK}(P,\sigma)$ the set of matrix-valued function $H:P \ra Gl(\bR^n)$ symmetric, positive definite and satisfying some boundary condition depending on $\sigma$ detailled in~\S\ref{sectDEFalmostK}. Then one can define a smooth toric {\it almost} K\"ahler structure $(g_H,J_H)$ on $(M,\omega)$ as explained in~\cite{H2FII,lejmi} and recalled in~\S\ref{sectDEFalmostK}. Such an almost K\"ahler structure $(g_H,J_H)$ is {\it extremal} in the sense of Lejmi if it satisfies the Abreu equation~\eqref{eqABREUintroH}, that is \begin{equation}\label{eqABREUintroH}S(H)=-\sum_{i,j=1}^n \frac{\partial^2H_{ij}}{\partial x_i \partial x_j}\in\mbox{Aff}(\kt^*).\end{equation}

Lejmi studied the notion of extremal toric almost K\"ahler metrics in~\cite{lejmi} and showed that a large and interesting part of them (the involutive type ones) is in one-to-one correspondence with $\mathcal{AK}(P,\sigma)$. 

Chen--Li--Sheng proved that existence of a toric Calabi extremal K\"ahler metrics implies that the toric variety is uniformly $K$--stable, proving one side of the conjecture for toric manifolds~\cite{CLS}. In this note we observe and explain that their proof works equally well for extremal almost K\"ahler metrics and prove that    
\begin{proposition} \label{propExtAlm_UKSintro} For any simple relatively compact $P\subset \bR^n$, we have 
\begin{equation}\label{eqCONJunifINTRO}\{\sigma\in \mM(\partial P)\,|\, \exists H\in \mAK(P,\sigma) \mbox{ such that } S(H) \in\mbox{Aff}(\kt^*)\} \subset \bUKS(P).\end{equation} 
In particular, if $(M,J,g,\omega)$ is a compact toric K\"ahler manifold such that $(M,\omega)$ admits a compatible extremal toric almost K\"ahler metrics of involutive type then $(M,J,[\omega])$ is uniformly $K$--stable\footnote{Here uniform K-stability should be understand as defined above, see Remark~\S\ref{remTESTconfig}.} with respect to toric degenerations. 
\end{proposition}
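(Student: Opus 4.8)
The plan is to reproduce, in the almost K\"ahler setting, the argument of Chen--Li--Sheng~\cite{CLS}, the point being that their computation only involves the matrix field $H$, its boundary behaviour encoded by $\sigma$, and the convexity of the test function, and never the fact that $H$ is the inverse Hessian of a symplectic potential. First I would record the integration-by-parts identity of Donaldson~\cite{don:scalTORICstab}: for every $H\in\mAK(P,\sigma)$ and every $f\in C^\infty(\bar P)$,
\[ \int_P \sum_{i,j} H_{ij}\,\frac{\partial^2 f}{\partial x_i\partial x_j}\,dx \;=\; 2\int_{\del P} f\,\sigma \;-\; \int_P f\,S(H)\,dx . \]
This identity is a direct consequence of the boundary conditions defining $\mAK(P,\sigma)$ (see \S\ref{sectDEFalmostK}) and holds verbatim whether or not $H$ comes from a genuine complex structure. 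Testing it against affine $f$, for which the left-hand side vanishes, and comparing with the defining property of the extremal affine function shows that the extremal hypothesis $S(H)\in\mbox{Aff}(\kt^*)$ forces $S(H)=\exv_\sigma$. Substituting $\exv_\sigma=S(H)$ into the functional then yields the representation
\[ \mL_{(P,\sigma)}(f) \;=\; \tfrac12\int_P \sum_{i,j} H_{ij}\,\frac{\partial^2 f}{\partial x_i\partial x_j}\,dx \]
for all $f$. Since $H$ is positive definite on $P$ and $f$ is convex, the integrand is the trace of a product of two positive semidefinite matrices, hence nonnegative; thus $\mL_{(P,\sigma)}(f)\ge 0$, which is $K$--semistability.

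The substantive step is to upgrade this nonnegativity to the uniform bound $\mL_{(P,\sigma)}(f)\ge\lambda\int_{\del P} f\,\sigma$ of Definition~\ref{defUKS}. Following~\cite{CLS}, I would establish a boundary trace inequality: the $\mAK$--boundary conditions force $H$ to vanish linearly along each facet $F_k$ while its conormal-conormal component has prescribed normal derivative governed by $\sigma|_{F_k}$, and this is exactly what is needed to bound $\int_{\del P} f\,\sigma$ by a multiple of $\int_P \sum_{i,j}H_{ij}\,\partial_i\partial_j f\,dx$ --- heuristically, one cannot make the boundary mass of $f$ large without bending $f$ near $\del P$, and near $\del P$ the factor $H\sim 2\sigma\cdot\mathrm{dist}$ makes such bending cost a definite amount of the interior integral. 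To obtain a single $\lambda>0$ valid for all normalized convex $f$ I would argue by contradiction together with the compactness of the normalized cone $\widetilde{\mC}$ used by Donaldson and Zhou--Zhu~\cite{don:scalTORICstab,zz}: a sequence $f_j$ with $\int_{\del P}f_j\,\sigma=1$ and $\mL_{(P,\sigma)}(f_j)\to0$ would subconverge to a convex $f_\infty$ with $\mL_{(P,\sigma)}(f_\infty)=0$, forcing $\sum_{i,j} H_{ij}\,\partial_i\partial_j f_\infty\equiv 0$ and hence $f_\infty$ affine, which is excluded by the normalization.

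I expect the main obstacle to be precisely this uniform constant: the trace inequality must be made quantitative uniformly up to the lower-dimensional faces of $P$, where the asymptotics of $H$ are more intricate than along the interior of a facet, and one must ensure that no boundary mass of $f$ is lost in the limiting procedure, since $f\mapsto\int_{\del P} f\,\sigma$ is not lower semicontinuous in general. The key conceptual observation, which makes the whole scheme go through, is that every estimate in~\cite{CLS} is phrased in terms of $H\in\mAK(P,\sigma)$ alone and never uses $H=(\mathrm{Hess}\,\pot)^{-1}$; verifying this potential-free character of their argument is the real content of the adaptation. Finally, the geometric statement follows from Lejmi's one-to-one correspondence~\cite{lejmi} between involutive-type extremal toric almost K\"ahler structures on $(M,\omega)$ and elements $H\in\mAK(P,\sigma)$ with $S(H)\in\mbox{Aff}(\kt^*)$: the existence of such a structure produces such an $H$, whence $\sigma\in\bUKS(P)$ by the inclusion, i.e. $(M,J,[\omega])$ is uniformly $K$--stable with respect to toric degenerations.
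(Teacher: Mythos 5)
Your opening and closing steps coincide with the paper's proof: the integration-by-parts identity (the paper's \eqref{eqFutHESS}, up to an immaterial factor of $2$), the observation that it forces $S(H)=\exv_{(P,\sigma)}$ and hence $\mL_{(P,\sigma)}(f)=\tfrac12\int_P\langle H,\mbox{Hess}\,f\rangle\,dx\geq 0$ on convex $f$, the guiding principle that Chen--Li--Sheng's argument uses only positive definiteness of $H$, the boundary conditions and the equation --- never $H=(\mbox{Hess}\,\pot)^{-1}$ --- and Lejmi's correspondence for the geometric conclusion. So the strategy is the right one, and it is the paper's.

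The gap is in your middle step, which you yourself call the substantive one, and it is genuine in two respects. First, the ``boundary trace inequality'' $\int_{\del P}f\,\sigma\leq C\int_P\langle H,\mbox{Hess}\,f\rangle\,dx$ is not an auxiliary estimate: by your own identity it is verbatim the inequality $\mL_{(P,\sigma)}(f)\geq\lambda\int_{\del P}f\,\sigma$ with $\lambda=1/2C$, i.e.\ Definition~\ref{defUKS} itself, so proposing it as a tool is circular. Second, the compactness-and-contradiction argument fails exactly at the point you flag but do not resolve: for a sequence $f_j\in\tilde{\mC}$ with $\int_{\del P}f_j\,\sigma=1$ and $\mL_{(P,\sigma)}(f_j)\to 0$, the locally uniform limit $f_\infty$ does not inherit the boundary normalization --- all boundary mass can escape, as for the convex functions $\max(0,1-j\ell_{\lab,s})$, which concentrate on a shrinking collar of the facet $F_s$ and converge locally uniformly to $0$ --- so $f_\infty$ can perfectly well be affine, indeed identically zero, while remaining in $\tilde{\mC}$; ``excluded by the normalization'' is false, since only the pointwise normalization, not the boundary-integral one, survives the limit. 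This loss of boundary mass is not a technicality to be checked; it is the central difficulty, and it is what the machinery of \cite{CLS} is built to handle. The paper's proof accordingly does not argue by naive compactness: it quotes the general, solution-free result of \cite{CLS} that uniform $K$--stability of $(P,\sigma)$ is equivalent to nonnegativity of (the extension of) $\mL_{(P,\sigma)}$ on a compactification $\mC_*^K$ of $\tilde{\mC}$ which keeps track of interior limits and limiting boundary data separately, and then observes that the one and only place where the solution enters their proof is Lemma 5.1 of \cite{CLS}, namely the uniform lower bound $\mL_{(P,\sigma)}(f_k)\geq\tau\,m_I(f)$ by the Monge--Amp\`ere measure of the limit on compact sets $I\subset\subset P$, which follows for any $H\in\mAK(P,\sigma)$ solving the equation from the identity \eqref{eqFutHESS} together with weak convergence of Monge--Amp\`ere measures. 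To close your argument, replace your middle paragraph by exactly this: the reduction to $\mC_*^K$ (no solution needed) plus Lemma 5.1 of \cite{CLS} re-proved with $H$ in place of $H^{\pot}$.
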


As a direct consequence of this last Proposition and Theorem~\ref{theoCHENCHENGHEintro} above we get  

\begin{corollary} \label{coroINTROconjIMPLIES} The existence of an extremal toric almost K\"ahler metric of involutive type compatible with $\omega$ implies the existence of a compatible extremal toric K\"ahler metric.     
\end{corollary}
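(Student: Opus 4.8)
The plan is to chain together the two results already established in the introduction: Proposition~\ref{propExtAlm_UKSintro}, which converts the existence of an extremal almost K\"ahler structure into a uniform $K$--stability statement, and Theorem~\ref{theoCHENCHENGHEintro}, which recovers a genuine extremal K\"ahler metric from that same stability. The conceptual point that makes this composition legitimate is that both existence problems are governed by one and the same piece of convex-affine data, namely the labelled polytope $(P,\sigma)$ attached to the symplectic toric manifold $(M,\omega)$ together with its maximal torus $T$. This polytope and its boundary measure $\sigma$ are intrinsic to the symplectic and toric data and do not depend on the choice of a compatible (almost) complex structure.

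First I would fix this labelled polytope. Given the compact toric K\"ahler manifold $(M,J,g,\omega)$, the Delzant/Atiyah--Guillemin--Sternberg construction produces $(P,\sigma)$, and every compatible invariant structure sharing the symplectic form $\omega$ is read off from this same $(P,\sigma)$: the integrable compatible ones are parametrized by symplectic potentials $\pot\in\mS(P,\sigma)$, while the almost K\"ahler ones of involutive type are parametrized by $H\in\mAK(P,\sigma)$ via Lejmi's correspondence~\cite{lejmi}. Thus the hypothesis that $(M,\omega)$ admits a compatible extremal toric almost K\"ahler metric of involutive type translates into the existence of some $H\in\mAK(P,\sigma)$ solving $S(H)\in\mbox{Aff}(\kt^*)$.

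Next I would apply Proposition~\ref{propExtAlm_UKSintro} to this $H$: since $H\in\mAK(P,\sigma)$ satisfies the Abreu equation, its inclusion statement gives $\sigma\in\bUKS(P)$, i.e.\ $(P,\sigma)$ is uniformly $K$--stable. Feeding this into Theorem~\ref{theoCHENCHENGHEintro} (the Chen--Cheng--He resolution) produces a symplectic potential $\pot\in\mS(P,\sigma)$ with $S(H^\pot)\in\mbox{Aff}(\kt^*)$. Since $\pot\in\mS(P,\sigma)$ encodes an honest integrable compatible structure $(g_\pot,J_\pot)$ with K\"ahler class $[\omega]$, this is precisely a compatible extremal toric K\"ahler metric, which is what was to be produced.

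Because both implications are already in hand, no genuinely hard analytic step remains; the only point requiring care is bookkeeping, namely checking that the measure $\sigma$ entering the boundary conditions defining $\mAK(P,\sigma)$ is the very same $\sigma$ appearing in $\mS(P,\sigma)$ and in the stability functional $\mL_{(P,\sigma)}$, so that the two black boxes compose on the nose. This is guaranteed by the intrinsic nature of the boundary measure of $(M,\omega,T)$. Finally, the resulting K\"ahler metric is automatically compatible with $\omega$, since in the Abreu framework varying the symplectic potential never alters the underlying symplectic form.
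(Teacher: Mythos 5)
Your proposal is correct and matches the paper's own argument exactly: the paper obtains Corollary~\ref{coroINTROconjIMPLIES} as a direct consequence of chaining Proposition~\ref{propExtAlm_UKSintro} (extremal almost K\"ahler of involutive type $\Rightarrow$ $\sigma\in\bUKS(P)$) with Theorem~\ref{theoCHENCHENGHEintro} (uniform $K$--stability $\Rightarrow$ existence of $\pot\in\mS(P,\sigma)$ solving the Abreu equation), using the same fixed labelled polytope $(P,\sigma)$ throughout. Your additional bookkeeping remark, that the boundary measure $\sigma$ is intrinsic to $(M,\omega,T)$ and hence the two results compose on the nose, is the right point of care and is consistent with the paper's setup.
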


\begin{rem} It is unlikely that in general, for compact K\"ahler manifold of non-toric type, the existence of an extremal almost K\"ahler metric $(M,J,\omega)$ implies uniform $K$--stability of $(M,J)$ or the existence of an extremal K\"ahler metric compatible with $\omega$. However, as pointed out in~\cite{KL}, a certain notion of stability could generalize the conjecture and theory to almost K\"ahler metrics.    
\end{rem}
 
In~\cite{ACGTFproj_curves}, for any $k_2, k_1>0$ and any toric symplectic form $\omega$ on the total space of the projective bundle $\bP(\mO\oplus \mO(k_1) \oplus \mO(k_2))\ra \bP^1$, they construct explicit examples of almost K\"ahler metrics compatible with $\omega$. One can check directly that these metrics are of involutive type. As an application of Corollary~\ref{coroINTROconjIMPLIES} we get the following. 

\begin{corollary} Each K\"ahler class of $\bP(\mO\oplus \mO(k_1) \oplus \mO(k_2))$ admits a compatible extremal toric K\"ahler metric.   
\end{corollary}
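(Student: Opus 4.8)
The plan is to read the statement off as a direct application of Corollary~\ref{coroINTROconjIMPLIES} to the family of metrics produced in~\cite{ACGTFproj_curves}. First I would recall that $M=\bP(\mO\oplus \mO(k_1)\oplus \mO(k_2))$ is a compact toric manifold: it is a $\bP^2$--bundle over $\bP^1$ carrying a natural effective Hamiltonian $T^3$--action. Consequently every K\"ahler class $\Omega$ on $M$ contains a $T^3$--invariant K\"ahler form $\omega$, and the symplectic toric manifold $(M,\omega)$ is encoded, via the Delzant correspondence, by a labelled simple polytope $(P,\sigma)$ whose normal directions (hence whose labels $\sigma$) are fixed by the fan of $M$ while its shape varies with $\Omega$. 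Fixing the complex structure and letting $\Omega$ range over the K\"ahler cone thus amounts to letting $(P,\sigma)$ range over all Delzant polytopes with the prescribed normals, so it suffices to treat an arbitrary such $(P,\sigma)$.

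The key input is the explicit construction of~\cite{ACGTFproj_curves}, which for every $k_1,k_2>0$ and every toric symplectic form $\omega$ on $M$ supplies a compatible toric almost K\"ahler metric whose Hermitian scalar curvature is an affine function of the moment map, i.e.\ an extremal toric almost K\"ahler metric in the sense of Lejmi. I would then carry out the direct check that these metrics are of involutive type: writing the metric in momentum coordinates one identifies the associated symmetric positive matrix function $H\in \mAK(P,\sigma)$ and verifies that $S(H)\in \mbox{Aff}(\kt^*)$ together with the involutivity condition isolating Lejmi's family~\cite{lejmi}. Because the ACGTF ansatz is of Calabi (bundle) type, with a block structure adapted to the fibration $M\to \bP^1$, the distribution entering Lejmi's involutivity condition respects the fibration and is readily seen to be integrable; this is where the words ``one can check directly'' are cashed out.

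With an involutive-type extremal toric almost K\"ahler metric compatible with $\omega$ in hand, Corollary~\ref{coroINTROconjIMPLIES}---itself obtained by feeding Proposition~\ref{propExtAlm_UKSintro} into Theorem~\ref{theoCHENCHENGHEintro}---produces a genuine compatible extremal toric K\"ahler metric $(g,J,\omega)$ on $(M,\omega)$. Since $J$ is equivariantly biholomorphic to the fixed toric complex structure of $M$ and its K\"ahler form is $\omega\in\Omega$, this metric is an extremal representative of $\Omega$. As $\Omega$ was an arbitrary K\"ahler class, every K\"ahler class of $\bP(\mO\oplus \mO(k_1)\oplus \mO(k_2))$ admits a compatible extremal toric K\"ahler metric. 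The only genuine work lies in the second step: confirming that the ACGTF metrics are simultaneously extremal \emph{and} of involutive type, so that they land inside $\{H\in \mAK(P,\sigma)\,|\,S(H)\in\mbox{Aff}(\kt^*)\}$, and that the construction really realizes every toric K\"ahler class rather than a proper subfamily; once this verification is complete the conclusion is immediate.
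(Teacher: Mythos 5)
Your proposal is correct and follows essentially the same route as the paper: the paper's entire argument is to invoke the explicit compatible extremal almost K\"ahler metrics of \cite{ACGTFproj_curves} for every toric symplectic form on $\bP(\mO\oplus \mO(k_1)\oplus \mO(k_2))$, note that a direct check shows they are of involutive type, and then apply Corollary~\ref{coroINTROconjIMPLIES}. Your additional remarks (averaging to get an invariant representative in each K\"ahler class, the Delzant encoding, and the Calabi-type block structure explaining the involutivity check) are correct elaborations of steps the paper leaves implicit.
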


%

The convex affine geometry point of view has been exploited successfully to provide a complete understanding of the situation, confirming the relative version of the Yau--Tian--Donaldson conjecture, when the moment polytope is a convex quadrilateral in~\cite{ACG,ACG2,TGQ,sektnan} (in particular for toric compact orbisurfaces with second betti number equal $2$) including explicit solution or destabilizing test configuration whenever they exist. A key ingredient of the aforementioned papers is an explicit {\it formal} solution $H_{A,B} :P \ra \mbox{Sym}^2(\kt^*)$ depending on $2$ polynomials $A$ and $B$ on one variable satisfying the boundary condition depending on $\sigma$ and satisfying the second order PDE corresponding to the extremal equation of Calabi. One of the main observations of \cite{ACG,ACG2,TGQ,sektnan} is that $H_{A,B}$ is positive definite if and only if the labelled polytope $(P,\sigma)$ is $K$--stable and if and only if $H_{A,B}$ is the inverse Hessian of a symplectic potential. 

A complete answer, like the one given for convex quadrilateral is certainly out of reach for convex polytope in general. However, we point out in this note that some parts of the strategy of \cite{ACG,ACG2,TGQ,sektnan} may be extended in general thanks to the following observation.

\begin{proposition}\label{propExistsFORMALintro}
For any simple labelled polytope $(P,\sigma)$, there exists an infinite dimensional family of formal extremal solutions $H :P \ra \mbox{Sym}^2(\kt^*)$ of equation~\eqref{eqABREUintroH} satisfying the boundary condition associated to $\sigma$. Whenever one of these solutions is positive definite on the interior of $P$, $(P,\sigma)$ is uniformly $K$--stable.     
\end{proposition}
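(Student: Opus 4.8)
The plan is to prove Proposition~\ref{propExistsFORMALintro} in two stages: first construct the infinite-dimensional family of formal solutions, then verify that positivity on the interior yields uniform $K$--stability via the functional $\mL_{(P,\sigma)}$.

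For the existence of formal solutions, I would exploit the crucial observation already emphasized in the excerpt: while the Abreu equation~\eqref{eqABREUintroH} is a fourth-order nonlinear problem when written for the symplectic potential, it is only a \emph{linear} second-order system when written directly for the matrix-valued map $H=(H_{ij})$. Concretely, I seek $H:P\ra\mbox{Sym}^2(\kt^*)$ solving the linear equation $-\sum_{i,j}\partial^2 H_{ij}/\partial x_i\partial x_j = \exv_{(P,\sigma)}$, together with the boundary conditions attached to $\sigma$ detailed in~\S\ref{sectDEFalmostK} (namely, on each facet with inward normal, $H$ restricts appropriately to the facet and its normal derivative is controlled by the labelling measure $\sigma$). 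Since the extremal affine function $\exv_{(P,\sigma)}$ is the fixed affine datum determined by $P$ and $\sigma$ (via the orthogonal projection in the relevant $L^2$ pairing, see~\S\ref{SECextVECTORfield}), the target is affine and hence the equation admits particular solutions by inspection. The generalized Guillemin-type boundary solution supplies one explicit $H_0$; then I add to it the space of solutions of the \emph{homogeneous} linear problem $\sum_{i,j}\partial^2 K_{ij}/\partial x_i\partial x_j=0$ with vanishing boundary contribution. This homogeneous solution space is manifestly infinite-dimensional: for instance, following the quadrilateral constructions of~\cite{ACG,ACG2,TGQ}, one can build entries of $K$ from products of the affine defining functions $L_F$ of the facets times arbitrary smooth functions, arranging the divergence and the boundary data to cancel. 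I would present this family in the style of the $H_{A,B}$ ansatz but with the free functions ranging over an infinite-dimensional space, which immediately gives the claimed infinite-dimensional family.

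For the second assertion, suppose one such formal $H$ is positive definite on the interior of $P$. I would run the integration-by-parts argument of Chen--Li--Sheng~\cite{CLS}, which is exactly the mechanism invoked in Proposition~\ref{propExtAlm_UKSintro}. The key identity pairs a convex test function $f\in\widetilde{\mC}$ against the equation: integrating $S(H)=\exv_{(P,\sigma)}$ against $f\,dx$ and integrating by parts twice, the boundary condition on $H$ converts the boundary term into $\int_{\partial P}f\,\sigma$, while the interior term produces $-\tfrac12\int_P f\,\exv_{\sigma}\,dx$ together with a remainder of the form $\sum_{i,j}\int_P H_{ij}\,\partial^2 f/\partial x_i\partial x_j\,dx$. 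This yields
\begin{equation}
\mL_{(P,\sigma)}(f)=\frac{1}{2}\sum_{i,j=1}^n\int_P H_{ij}\,\frac{\partial^2 f}{\partial x_i\partial x_j}\,dx .
\end{equation}
Because $f$ is convex, its Hessian $(\partial^2 f/\partial x_i\partial x_j)$ is positive semidefinite, and since $H$ is positive definite, the integrand is a trace of a product of a positive-definite matrix with a positive-semidefinite one, hence nonnegative. This shows $\mL_{(P,\sigma)}(f)\geq 0$, and with the standard normalization one upgrades the strict positivity of $H$ on the interior to a genuine lower bound $\mL_{(P,\sigma)}(f)\geq\lambda\int_{\partial P}f\,\sigma$ for some $\lambda>0$, giving $\sigma\in\bUKS(P)$ by Definition~\ref{defUKS}.

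The main obstacle is making the boundary integration-by-parts rigorous in the \emph{formal} setting: a formal solution need not extend smoothly across $\partial P$, so I must check that the required boundary behavior encoded in $\mAK(P,\sigma)$ is enough to justify each integration by parts and to produce the boundary term $\int_{\partial P}f\,\sigma$ with the correct sign and normalization. This is precisely the point where one must verify that the boundary conditions of~\S\ref{sectDEFalmostK} match those used in the Chen--Li--Sheng computation; I expect this to reduce to the same facet-by-facet normal-derivative bookkeeping already validated for the genuine almost K\"ahler case, so that the formal regularity suffices and no smoothness up to the boundary beyond what $\mAK(P,\sigma)$ guarantees is actually needed. Obtaining the strict inequality (uniform, not merely $K$--stable) then follows from positive-definiteness of $H$ on the interior exactly as in~\cite{CLS}, completing the argument.
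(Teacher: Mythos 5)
The second half of your argument (a positive definite formal solution forces uniform $K$--stability) is essentially the paper's own route: it is exactly Proposition~\ref{propExtAlm_UKSintro}, namely the identity~\eqref{eqFutHESS} obtained by integrating by parts against the boundary conditions, followed by the Chen--Li--Sheng compactification and their Lemma 5.1; deferring the upgrade from $\mL_{(P,\sigma)}(f)\geq 0$ to the uniform bound to \cite{CLS} is what the paper does as well (your factor $\tfrac12$ versus the paper's normalization is harmless). The genuine gap is in the first half: you never produce a single formal solution. Your proposed particular solution fails, because the Guillemin-type $H_0=H^{\pot_\lab}$ does satisfy the boundary conditions, but $S(H^{\pot_\lab})$ is \emph{not} affine for a general simple labelled polytope --- the Guillemin metric is not extremal --- so $H_0\notin\mW(\sigma)$. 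Adding solutions $K$ of the homogeneous problem ($S(K)=0$ with vanishing boundary data) cannot repair this: $S(H_0+K)=S(H_0)\neq\exv_{P,\sigma}$, so your entire family misses the equation by the same fixed non-affine error. What is actually needed is a solution of the \emph{inhomogeneous} problem $S(K)=\exv_{P,\sigma}-S(H^{\pot_\lab})$ with homogeneous boundary conditions, and neither ``by inspection'' nor the $H_{A,B}$-type ansatz of \cite{ACG,ACG2,TGQ} supplies it: those explicit solutions rest on separation of variables special to quadrilaterals (ambitoric/orthotoric structure) and do not exist on a general simple polytope. Note also that simple ansätze collapse: e.g.\ $K_{ij}=\delta_{ij}\phi$ turns the homogeneous boundary conditions into Cauchy data $\phi=0$, $d\phi=0$ on $\partial P$ for a Poisson equation, which is overdetermined.

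The paper's existence proof is correspondingly non-elementary and is the idea your proposal is missing. It combines: (a) the theorem of \cite{RKE_legendre} that every simple compact polytope admits a distinguished labelling $\sigma_{KE}\in\mM(P)$ and a potential $\pot_{KE}\in\mS(P,\lab_{\sigma_{KE}})$ whose metric is K\"ahler--Einstein, so that $H^{\pot_{KE}}\in\mW^+(\sigma_{KE})$; (b) Donaldson's openness result (Proposition~\ref{DonPropOPEN}) to get a whole open set $U\subset\mM(P)$ with $\mW^+(\sigma)\neq\emptyset$; and (c) the linearity you correctly identified, but deployed in $\sigma$ rather than around a fixed $\sigma$: since the equation, the boundary conditions~\eqref{2ndBCsigma}, and $\sigma\mapsto\exv_{P,\sigma}$ are all linear, and since an open subset of $\mM(P)\simeq\bR^d_{>0}$ contains a basis $\{\sigma_s\}_{s=1,\dots,d}$ of $\bR^d$, any $\sigma=\sum_s a_s\sigma_s$ with real (possibly negative) coefficients inherits the formal solution $\sum_s a_s H_s$ --- positivity is lost, which is precisely why the conclusion is only about formal solutions. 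Your observation that the homogeneous solution space is infinite dimensional (e.g.\ compactly supported double-divergence-free symmetric tensors) is fine, but it only yields the claimed family once one particular solution exists, and that is the step your proposal does not supply.
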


We discuss in~\S\ref{sectFORMALsol} consequences of this last result and open problems in relation with the relative toric version of the Yau--Tian--Donaldson conjecture. 

In the next section we gather facts, definition, key results and recall brief explanations on the topic of toric extremal (almost) K\"ahler metrics. Section~\ref{sectMAIN} contains the proof of Propositions~\ref{propExtAlm_UKSintro} and~\ref{propExistsFORMALintro}.\\  

\noindent{\bf Aknowledgement} The fact that the statement of Theorem~\ref{theoCHENCHENGHEintro} should follow more or less directly by the works of \cite{don:scalTORICstab,WHe,zz} has been pointed out to me by Vestislav Apostolov. I also thank Mehdi Lejmi for comments on a previous version and the anonymous referee for careful reading.

\section{Labelled polytope and toric (almost) K\"ahler geometry}\label{SEClabPOL}
 \subsection{Rational labelled polytopes and toric symplectic orbifolds}\label{subsecTORICsymp}

%

\subsubsection{Notations}\label{notationLABpol}

 In the sequel a {\it polytope} $P$ refers to an open, convex, polyhedral, {\it simple} and relatively compact subset of an affine space $\kt^* \simeq \bR^n.$ {\it Simple} means that each vertex is the intersection of exactly $n$ facets (where $n$ is the dimension of $\kt^*$). We order and denote the facets $F_1,\dots F_d \subset \overline{P}$. Choosing a non-zero inward normal vector $\lab_s\in \kt$ to each facet $F_s$, we can write $$P=\{x\in\kt^* \,|\, \ell_{\lab,s}(x)> 0,\, s=1,\dots, d\}$$ where $\ell_{\lab,s}$ is the unique affine-linear function on $\kt^*$ such that $d\ell_{\lab,s}=\lab_s$ and $$F_s= \ell_{\lab,s}^{-1}(0)\cap\overline{P}.$$   
 
 \begin{definition} Let $P \subset \kt^*$ be a polytope as above. 
 \begin{itemize}
  \item[(a)] A {\it labelling} for $P$ is an ordered set of non-zero vectors $\lab=(\lab_1,\dots, \lab_d) \in (\kt)^d$ each $\lab_s$ being normal to the facet $F_s$ and inward to $P$. A {\it labelled polytope} is a pair $(P,\lab)$.
  \item[(b)] A {\it rational labelled polytope} is a triple $(P,\lab, \Lambda)$ where $(P,\lab)$ is a labelled polytope and $\Lambda\subset \kt$ is a lattice containing the labels $\lab_1,\dots, \lab_d$.
  \item[(c)] A {\it Delzant polytope} is a pair $(P,\Lambda)$ where $\Lambda\subset \kt$ is a lattice containing a set of labels $\lab=(\lab_1,\dots, \lab_d)$ such that for each vertex $\{p\}=\cap_{s\in I_p}F_s$ the set $\{\lab_s\,|\, s\in I_p\}$ is a $\bZ$--basis of $\Lambda$. 
 \end{itemize}
 \end{definition}

 We denote by $\mN(P) :=\{ \lab=(\lab_1,\dots, \lab_d) \in (\kt)^d \,|\, (P,\lab) \emph{ labelled polytope}\}$. Obviously $\mN(P)\simeq \bR^d_{>0}$.  We will also be working on the dual space $\mM(P)$ of mesures $\sigma$ on $\del P$ such that there exists a labelling $\lab\in \mN(P)$ satisfying  \begin{equation}\label{eqlabels=mes}
 \lab_s\wedge \sigma = -dx\qquad \qquad \mbox{ on } F_s
 \end{equation} where $dx=dx_1\wedge\dots\wedge dx_n$ is a fixed affine invariant volume form on $\kt^*$. Again $\mM(P)\simeq \bR^d_{>0}$ and $\sigma\in \mM(P)$ is determined by its restriction to the facets of $P$. We write (formally) $\sigma = (\sigma_1,\dots, \sigma_d)$ where $\sigma_s= \sigma_{|_{F_s}}$ is an affine invariant $(n-1)$--form on the hyperplane supporting $F_s$.    
 
 \begin{rem} Fixing $dx=dx_1\wedge\dots\wedge dx_n$ once and for all, we get a bijection $\mN(P)\simeq \mM(P)$, $\lab\mapsto\sigma_\lab$ with inverse $\sigma\mapsto \lab_\sigma$ given by the relation \eqref{eqlabels=mes}. In the following we use both notation $(P,\sigma)$ or $(P,\lab)$ for the labelled polytope $(P,\lab_\sigma)$. 
\end{rem} 
 \subsubsection{Delzant--Lerman--Tolman correspondence}\label{subsubSECTlocalCHARTS}  
 Delzant showed that compact toric symplectic manifolds are in one to one correspondance with Delzant polytopes via the momentum map~\cite{delzant:corres} and Lerman--Tolman \cite{LT:orbiToric} extended the correspondence to orbifolds by introducing rational labelled polytope. They are many ways to construct the corresponding (compact) toric symplectic orbifold $(M,\omega,T:=\kt/\Lambda)$ from the data $(P,\lab, \Lambda)$. We recall only the one we will use which, as far as we know, has been developped in~\cite{DuistPelayo,don:Large,RKE_legendre}.

\noindent (1) {\it Local toric charts:} Each vertex $p$ of $P$ is the intersection of $n$ facets thus corresponds to a subset $I_p\subset \{1,\dots, d\}$ of $n$ indices which in turn corresponds to a basis of $\kt$ namely $\{\lab_s\,|\, s\in I_p \}$ that induces a sublattice $\Lambda_p=\mbox{span}_\bZ\{\lab_i\,|\, i\in I_p\}$ of $\Lambda$. Considering the torus $T_p= \kt/\Lambda_p$ we get a (non-compact) toric symplectic manifold $$(M_p:=\oplus_{s\in I_p}\bC \lab_{s} \simeq \bC^n, \omega_{std}, T_p)$$ by identifying $T_p\simeq \bT^n=\quot{\bR^n}{\bZ^n}$ via which $T_p$ acts on $\bC^{n}$. The momentum map $x_p : M_p \ra \kt^*$ is given $$x_p(z) = p + \frac{1}{2} \sum_{\in I_p} |z_s|^2\alpha_{s}$$ where $\{ \alpha_{\lab,i}\,|\, i\in I_p \}\subset \kt^*$ is the dual basis of $\{\lab_i\,|\, i\in I_p\}$.

\noindent (2)  {\it Gluing over $P\times T$:}\label{subsubSECTgluing} Now using the exact sequence $$\Lambda/\Lambda_p \hookrightarrow  T_p \stackrel{\phi_p}{\twoheadrightarrow} T$$ where $T= \kt /\Lambda$ we get a way to glue equivariantly the (uniformizing) chart $M_p$ over $P\times T$ seen as a toric symplectic manifold with momentum map $x$ being the projection on the first factor, see~\cite{RKE_legendre} for more details.  
     
In this construction, $(M,\omega)$ is obtained as the compactification of $(P\times T, dx\wedge d\theta)$. Here $dx\wedge d\theta$ is the canonical symplectic form of $P\times T$ coming from the one of the universal cover $P\times \kt \subset \kt^*\times \kt$. In particular, we get directly a set of action angle coordinates $(x,\theta)$ on the set where the action is free $\mathring{M}= P\times T =x^{-1}(P)$. These coordinates are usually constructed with the help of a K\"ahler metric~\cite{CDG} and one can prove that they are well defined up to an equivariant symplectomorphism.

\subsection{Symplectic potentials and toric K\"ahler metrics}\label{subsecSYMpot} Let $(M,\omega,T)$ be a compact toric symplectic orbifold associated with the rational labelled polytope $(P,\lab,\Lambda)$. In particular $x : M \ra \overline{P}$ is the momentum map. We fix a set of action angle coordinates $(x,\theta)$ on the set $\mathring{M}$ where the torus action is free. The next proposition gathers some now well-known facts establishing a correspondence between toric K\"ahler structures and symplectic potentials.

 \begin{proposition}\label{LOCALmetric} \cite{abreu,H2FII, don:interior, guillMET} For any strictly convex function $\pot\in C^{\infty}(P)$,
\begin{align}\label{ActionAnglemetric}
g_{\pot} = \sum_{i,j} \pot_{ij}dx_i\otimes dx_j + \pot^{ij}d\theta_i\otimes d\theta_j,
\end{align}
with $(\pot_{ij})=\mbox{Hess }\pot$ and $(\pot^{ij})=(\pot_{ij})^{-1}$, is a smooth K\"ahler structure on $P\times T$ compatible with the symplectic form $dx\wedge d\theta$. Conversely, any $T$--invariant compatible K\"ahler structure on $(P\times T, dx\wedge d\theta)$ is of this form.

Moreover, the metric $g_\pot$ is the restriction of a smooth (in the orbifold sense) toric K\"ahler metric on $(M,\omega)$ if and only if 
\begin{itemize}
 \item[(1)] $\pot\in C^0(\overline{P})$ whose restriction to $P$ or to any face's interior (except vertices), is smooth and strictly convex;
 \item[(2)] $\pot-\pot_{\lab}$ is the restriction of a smooth function defined on an open set containing $\overline{P}$ where \begin{equation}\label{GuilleminPotential} \pot_{\lab} = \frac{1}{2}\sum_{s=1}^d \ell_{\lab,s} \log \ell_{\lab,s}\end{equation} is the so-called {\it Guillemin potential}.
\end{itemize}
 \end{proposition}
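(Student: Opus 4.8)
The plan is to establish the three assertions in turn: the local construction of $g_\pot$ as a compatible K\"ahler metric, the converse recovering a symplectic potential from an arbitrary invariant compatible K\"ahler structure, and finally the orbifold smoothness governed by conditions (1)--(2). The first two are essentially formal, while the last is the analytic heart.

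For the forward direction, given a strictly convex $\pot\in C^\infty(P)$ I would define the endomorphism $J$ of $T(P\times T)$ by
\[ J\del_{x_i}=\sum_j \pot_{ij}\,\del_{\theta_j},\qquad J\del_{\theta_i}=-\sum_j\pot^{ij}\,\del_{x_j}. \]
Since $(\pot_{ij})$ and $(\pot^{ij})$ are mutually inverse, $J^2=-\mathrm{Id}$ is immediate, and a direct check gives $g_\pot=(dx\wedge d\theta)(\cdot\,,J\cdot)$ with $g_\pot$ symmetric and positive definite by strict convexity; hence $(g_\pot,J,dx\wedge d\theta)$ is an almost K\"ahler triple. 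To upgrade this to K\"ahler I would exhibit holomorphic coordinates: setting $y_i=\pot_i=\del\pot/\del x_i$ and $w_i=y_i+\sqrt{-1}\,\theta_i$, strict convexity makes the gradient map $x\mapsto\nabla\pot(x)$ a diffeomorphism of $P$ onto its image (the Legendre transform), so $(y,\theta)$ are coordinates and $z_i=\Exp(w_i)$ realize $(P\times T,J)$ as an open subset of $(\bC^*)^n$. A short computation yields $J^*dw_i=\sqrt{-1}\,dw_i$, so the $w_i$ are holomorphic, $J$ is integrable, and $g_\pot$ is K\"ahler. This step is routine linear algebra together with the Legendre transform.

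For the converse, let $(g,J)$ be a $T$-invariant compatible K\"ahler structure. The fields $K_i=\del_{\theta_i}$ are commuting holomorphic Killing fields with Hamiltonians $x_i$, and compatibility gives $JK_i=\mathrm{grad}^g x_i$ together with $g(JK_i,K_j)=(dx\wedge d\theta)(K_i,K_j)=0$. Passing to the complexified action of $(\bC^*)^n$, I would introduce logarithmic holomorphic coordinates $w_i=\xi_i+\sqrt{-1}\,\theta_i$ in which the invariant metric admits a local K\"ahler potential $F(\xi)$ depending only on $\xi$; the moment map is then $x=\nabla_\xi F$, and the Legendre-dual $\pot(x)=\langle x,\xi\rangle-F(\xi)$ is a strictly convex symplectic potential with $g=g_\pot$. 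Equivalently, one works in action--angle coordinates, removes the cross term $g(\del_{x_i},\del_{\theta_j})$ by a shearing $\theta\mapsto\theta+d(\,\cdot\,)$ (which preserves $dx\wedge d\theta$), and then $J^2=-\mathrm{Id}$ forces the $dx$- and $d\theta$-blocks to be mutually inverse while integrability forces the $dx$-block to be a Hessian.

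The genuinely delicate point, and the step I expect to be the main obstacle, is the equivalence with (1)--(2): deciding when $g_\pot$ on $\mathring M=P\times T$ extends to a smooth orbifold metric on the compactification $(M,\omega)$ of \S\ref{subsubSECTgluing}. The strategy is to argue facet by facet and vertex by vertex in the local uniformizing charts $M_p\simeq\bC^n$. First I would verify the base case that the Guillemin potential $\pot_\lab=\tfrac12\sum_s\ell_{\lab,s}\log\ell_{\lab,s}$ produces the standard smooth toric K\"ahler metric on $M$: in each chart the singular behaviour of $(\pot_\lab^{ij})$ as $\ell_{\lab,s}\to0$ is exactly that of the flat model, so $g_{\pot_\lab}$ closes up smoothly across the invariant divisors. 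Writing a general potential as $\pot=\pot_\lab+f$, condition (2) says $f$ is smooth up to the boundary, whence $g_\pot-g_{\pot_\lab}$ is an invariant tensor built from derivatives of $f$ that extends smoothly, while condition (1) keeps it positive definite and nondegenerate along the open faces; conversely, smoothness of $g_\pot$ forces its degeneration at each facet to match the logarithmic model, which is precisely (2). The hard part is this explicit boundary analysis: one must track how the inverse Hessian $(\pot^{ij})$ collapses in the directions normal to $F_s$ and check that, in the complex chart, this corresponds to the smooth vanishing of $|z_s|^2$, so that the $\tfrac12\,\ell_{\lab,s}\log\ell_{\lab,s}$ terms capture exactly the required singular part and adding anything smooth up to $\overline P$ neither creates nor destroys smoothness on $M$.
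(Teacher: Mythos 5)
The paper offers no proof of this proposition: it is quoted as a collection of well-known facts from \cite{abreu,guillMET,don:interior,H2FII}, with only the remark afterwards that the boundary conditions (1)--(2) arise from comparing $g_\pot$ with $g_{\pot_\lab}$ on the vertex charts $M_p$. Your proposal correctly reconstructs exactly that standard argument --- the explicit $J$ with Legendre-transform holomorphic coordinates $w_i=\pot_i+\sqrt{-1}\,\theta_i$, the shear $\theta\mapsto\theta+\nabla h(x)$ normalizing an arbitrary invariant structure, and the facet-by-facet comparison with the Guillemin model in the local uniformizing charts (including the genuinely delicate tracking of how $(\pot^{ij})$ degenerates against $d\theta_i\otimes d\theta_j$ near each $F_s$) --- so it is correct and follows essentially the same route the paper points to.
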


 The functions $\pot$ satisfying the conditions of the previous Proposition are called {\it symplectic potentials} and we denote the set of such as $\mS(P,\lab)$ or $\mS(P,\sigma_\lab)$ . In sum, the set of smooth compatible toric (orbifold) K\"ahler metrics on $(M,\omega,T)$ is in one-to-one correspondance with the quotient of $\mS(P,\lab)$ by ${\rm Aff}(\kt^*,\bR)$, acting by addition. The correspondance is explicit and given by \eqref{ActionAnglemetric}.\\ 
 
\begin{rem}\label{remGP} The Guillemin potential $\pot_{\lab}$ lies in $\mS(P,\lab)$ and corresponds to the Guillemin K\"ahler metric on the toric symplectic orbifold in the rational case. 
\end{rem}

The boundary conditions (1) and (2) of Proposition~\ref{LOCALmetric} appear when comparing the metrics $g_\pot$ and $g_{\pot_\lab}$ on the charts $M_p$ as defined in~\S\ref{subsubSECTgluing}.

\begin{rem}\label{cplxPoV}
 Passing from symplectic to complex point of views is direct in toric geometry. Given $\pot \in \mS(P,\sigma)$ the map $(x,\theta)\mapsto (\nabla u)_x +\sqrt{-1} \theta$ provides the complex coordinates as the coordinates on the universal covering of the big orbit $\mathring{M} \simeq (\bC^*)^n$, see e.g.\cite{don:Large}. In these coordinates the K\"ahler potential of the K\"ahler form $\omega$ is the Legendre transform of $\pot$.   
\end{rem}

\subsection{Toric almost K\"ahler metrics}\label{sectDEFalmostK}

An almost K\"ahler structure $(g,J,\omega)$ on $M^{2n}$ has everything of a K\"ahler structure but the endomorphism $J\in\Gamma(\mbox{End}(TM))$, is not necessarily integrable. That is, $g$ is a Riemannian metric, $\omega$ is a symlectic form, and $J\in\Gamma(\mbox{End}(TM))$ squares to minus the identity and they satisfy the following compatibility relation: $$g(J\cdot,J\cdot)= g(\cdot,\cdot) \;\;\; g(J\cdot,\cdot)= \omega(\cdot,\cdot).$$ A toric almost K\"ahler metric $(g,J)$ is then an almost K\"ahler metric on a toric symplectic manifold/orbifold $(M,\omega,T)$ such that $(g,J)$ is compatible with $\omega$ and $g$ (equivalently $J$) is invariant by the torus $T$. 

Let $(M,\omega, T)$ be a toric symplectic manifold with a momentum map $x :M\ra \kt^*$ and moment polytope $\overline{P}=x(M)$ labelled by $\lab\in\mN(P)$. We use notation layed in~\S\ref{notationLABpol} and fix a set of affine coordinates $x=(x_1,\dots,x_n)$ on $ \kt^*$. In~\cite{lejmi}, the author proves among other things that $T$--invariant almost K\"ahler structures compatible with $(M,\omega)$ and such that the $g$--orthogonal distribution to the orbit is involutive (we call it toric almost K\"ahler structure of involutive type) are parametrized by symmetric bilinear forms   
\begin{equation}\label{eqSYMbilFORM}
H: \overline{P} \ra \mbox{Sym}^2(\kt^*) 
\end{equation} satisfying some conditions pointed out in~\cite{H2FII} that we now recall. 
\begin{itemize}
\item[(i)]{\bf Smoothness} $H$ is the restriction on $\overline{P}$ of a smooth $\mbox{Sym}^2(\kt^*)$--valued function defined on an open neighborhood of $\overline{P}$.
\item[(ii)]{\bf Boundary condition}  For any point $y$ in interior of a codimension $1$ face $F_s\subset \overline{P}$, we have 
\begin{equation}\label{1stBC}
 H_y(\lab_s,\cdot)= 0 
\end{equation}
\begin{equation}\label{2sdBC}
 dH_y(\lab_s,\lab_s)= 2\lab_s. 
\end{equation}
\item[(iii)]{\bf Positivity} For any point $y$ in interior $\mathring{F}$ of a face $F\subset \overline{P}$, $H$ is positive definite as $\mbox{Sym}^2(T_y\mathring{F})$--valued function.   
\end{itemize}

\begin{proposition}\label{propACGTFaK}[\cite{H2FII,lejmi}] Let $(M,\omega, T)$ be a toric symplectic manifold and $(g,J)$ be a compatible $T$--invariant almost K\"ahler metric of involutive type compatible with $\omega$. Then the symmetric bilinear form defined for $a,b\in\kt$ and $x\in \overline{P}$ by $H_x(a,b):=g_p(X_a,X_b)$ for any $p\in M$ such that $x(p)=x$, satisfies the conditions \emph{(i), (ii)} and \emph{(iii)}.  Moreover, for any such symmetric bilinear form $H: \overline{P} \ra \mbox{Sym}^2(\kt^*)$ satisfying conditions \emph{(i), (ii)} and \emph{(iii)} there is a unique compatible $T$--invariant almost K\"ahler metric $(g_H,J_H)$ of involutive type satisfying $H_{x(p)}(a,b)=g^H_p(X_a,X_b)$ for any $p\in M$. With respect to action angle coordinates $(x,[\theta])$ on $ \kt^*\times T\simeq\mathring{M}$, the metric $g$ is given as \begin{align}\label{ActionAnglemetricAK}
g = \sum_{i,j} G_{ij}dx_i\otimes dx_j + H_{ij}d\theta_i\otimes d\theta_j,
\end{align}
\end{proposition}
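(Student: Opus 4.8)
The plan is to treat the two directions separately, reducing everything first to the open dense orbit $\mathring{M}=P\times T$ and then analysing the behaviour at the degenerate orbits through the local uniformising charts $M_p$ of \S\ref{subsubSECTlocalCHARTS}.

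First I would establish the normal form \eqref{ActionAnglemetricAK} on $\mathring{M}$. Since $g$ is $T$--invariant and each fundamental vector field $X_a$ is $T$--invariant (the torus being abelian), the quantity $g_p(X_a,X_b)$ depends only on $x=x(p)$, so $H$ is a well-defined $\mbox{Sym}^2(\kt^*)$--valued function on $P$; in the coordinates $(x,\theta)$ one has $X_{e_i}=\partial_{\theta_i}$, hence $H_{ij}=g(\partial_{\theta_i},\partial_{\theta_j})$. The vertical distribution $\mathcal{V}=\ker dx=\mbox{span}(\partial_{\theta_i})$ is Lagrangian for $\omega=\sum dx_i\wedge d\theta_i$, so its $g$--orthogonal complement is $\mathcal{H}=J\mathcal{V}$, which is again Lagrangian. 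Writing the horizontal lifts as $\partial_{x_i}+\sum_k c_{ik}\partial_{\theta_k}$, the Lagrangian condition forces $c$ to be symmetric, and the involutive type hypothesis (integrability of $\mathcal{H}$), using that the $c_{ik}$ depend only on $x$, forces $\partial_{x_i}c_{jk}=\partial_{x_j}c_{ik}$; hence $c_{ij}=\partial_{x_i}\partial_{x_j}\Psi$ for some $\Psi(x)$. The equivariant symplectomorphism $\theta\mapsto\theta-d_x\Psi$, which is admissible since action-angle coordinates are unique up to such maps, then puts $g$ in block-diagonal form, and a short computation with $g(J\cdot,\cdot)=\omega$ and $J^2=-\mbox{Id}$ gives $J\partial_{x_i}=\sum_k H^{ik}\partial_{\theta_k}$, forces the $dx$--block to equal $H^{-1}$, and yields \eqref{ActionAnglemetricAK} together with the uniqueness of $(g,J)$ given $H$.

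For the boundary conditions I would read them off from the degeneration of the orbits. On the interior of a facet $F_s$ the stabiliser is the circle generated by $\lab_s$, so $X_{\lab_s}$ vanishes there and $H_y(\lab_s,\cdot)=g(X_{\lab_s},X_\cdot)=0$, which is \eqref{1stBC}; equation \eqref{2sdBC} is the first-order smoothness requirement at this circle degeneration, obtained by comparing $X_{\lab_s}$ with the standard rotational field in the $\bC$--factor of $M_p$, where $g(\partial_\theta,\partial_\theta)=r^2=2\ell_{\lab,s}$ to leading order so that $dH(\lab_s,\lab_s)=2\,d\ell_{\lab,s}=2\lab_s$. Conversely, given $H$ satisfying (i)--(iii), I would define $J_H$ and the block metric $g_H=\sum H^{ij}dx_i\otimes dx_j+\sum H_{ij}d\theta_i\otimes d\theta_j$ on $\mathring{M}$; it is manifestly $\omega$--compatible, almost K\"ahler, and of involutive type, since its horizontal distribution $\mbox{span}(\partial_{x_i})$ is integrable.

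The main obstacle is the smooth extension of $(g_H,J_H)$ across the degenerate orbits to a tensor on the compact orbifold $M$, i.e.\ showing that conditions (i)--(iii) are exactly the right ones. The plan is to work in each chart $M_p\simeq\bC^n$ of \S\ref{subsubSECTlocalCHARTS} and pull $g_H$ back by the momentum map $x_p$. Writing $H=H_{\lab}+(H-H_{\lab})$, where $H_{\lab}$ is the smooth Guillemin model (the inverse Hessian of $\pot_\lab$), condition (i) together with \eqref{1stBC} shows that $H-H_{\lab}$ extends to a smooth $\mbox{Sym}^2(\kt^*)$--valued function with $\lab_s$ in its kernel along $F_s$, exactly the statement that the two metrics differ by a smooth tensor as in Proposition~\ref{LOCALmetric}(2). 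The normalisation \eqref{2sdBC} guarantees that the leading quadratic behaviour of $g_H$ in the collapsing $\bC$--directions matches the flat model, so $g_H$ is smooth and nondegenerate there, while (iii) gives positive-definiteness in the directions tangent to each face. Carrying out this chart-by-chart comparison, which is the technical heart and the content cited from~\cite{H2FII,lejmi}, simultaneously proves the smoothness of $g_H$ and the fact that the $H$ attached to a given $(g,J)$ satisfies (i)--(iii), establishing the asserted bijection.
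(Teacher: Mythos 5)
You should first be aware that the paper contains no proof of Proposition~\ref{propACGTFaK} at all: it is stated as a recalled result, with the argument outsourced to \cite{H2FII,lejmi}. So there is no internal proof to compare against; the relevant comparison is with the standard argument of those references, and your outline does follow that route. Your treatment of the open orbit is correct and complete: $T$--invariance makes $H$ a function of $x$ alone; the $g$--orthogonal complement of the (Lagrangian) orbit distribution is $J\mathcal{V}$, again Lagrangian; writing it as a graph $\partial_{x_i}+\sum_k c_{ik}\partial_{\theta_k}$, the Lagrangian condition gives symmetry of $c$, involutivity gives $\partial_{x_i}c_{jk}=\partial_{x_j}c_{ik}$, hence $c_{ij}=\Psi_{,ij}$, and the equivariant symplectomorphism $\theta\mapsto\theta-d_x\Psi$ (the correct use of the ambiguity in action--angle coordinates) produces the block form \eqref{ActionAnglemetricAK} together with uniqueness of $(g_H,J_H)$ on the dense orbit. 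The necessity of \eqref{1stBC} from the vanishing of $X_{\lab_s}$ over $\mathring{F}_s$, and of \eqref{2sdBC} from matching the flat model in the chart $M_p$ (where compatibility with $\omega_{std}$ pins the conformal factor to $1$ at the degenerate orbit, so $H(\lab_s,\lab_s)=2\ell_{\lab,s}+O(\ell_{\lab,s}^2)$), is also the argument of \cite{H2FII}.

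There is, however, one imprecision in your converse direction worth correcting. You assert that condition (i) together with \eqref{1stBC} alone already shows that $g_H$ and the Guillemin metric ``differ by a smooth tensor.'' That is false as stated: if $K:=H-H^{\pot_{\lab}}$ merely vanishes on $\lab_s$ along $F_s$, then $K(\lab_s,\lab_s)=O(\ell_{\lab,s})$, and a term of size $\ell_{\lab,s}$ in the $d\theta$--block does not extend smoothly across the divisor --- already on $\bC$ with $\ell=|z|^2/2$ the tensor $\ell\, d\theta^2$ has Cartesian coefficients like $x^2/(x^2+y^2)$, discontinuous at the origin. What is needed is the full homogeneous boundary condition $K(\lab_s,\cdot)=0$ \emph{and} $dK(\lab_s,\lab_s)=0$ on $F_s$, i.e. $K(\lab_s,\lab_s)=O(\ell_{\lab,s}^2)$, and this is precisely where \eqref{2sdBC} enters; only then do the singular parts of the $dx$-- and $d\theta$--blocks cancel against the model. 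You do invoke \eqref{2sdBC} in your next sentence (``leading quadratic behaviour matches the flat model''), so the ingredients are all present in your sketch, but the logical bookkeeping should attribute smoothness of the difference to both boundary conditions jointly, not to \eqref{1stBC} alone. With that repair, your proposal is a faithful reconstruction of the proof in \cite{H2FII,lejmi}, with the same chart-by-chart comparison as its technical core.
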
 where $G=(G_{ij})= H^{-1}$. 

\begin{rem}
 Condition \eqref{1stBC} implies that $H(u_s,\cdot) : \overline{P} \ra \bR$ vanishes on $F_s$ and in particular is constant. Then for all $y\in\mathring{F}_s$, we have $$(dH)_y(u_s,\cdot) \in \kt^*\otimes (T_y\mathring{F}_s)^0 = \kt^*\otimes \bR u_s$$ where $(T_y\mathring{F}_s)^0= \bR u_s$ denotes the annihilator of $T_y\mathring{F}_s \subset T_y(\kt^*)=\kt^*$ in $\kt$. Therefore condition \eqref{2sdBC} is that the trace of $(dH)_y(u_s,\cdot)$ equals $2$.  
\end{rem}

Fixing an affine invariant volume form $dx=dx_1\wedge\dots\wedge dx_n$, the labelling $\lab\in \mN(P)$ corresponds to a measure $\sigma\in\mM(P)$ as defined in \S\ref{notationLABpol}. Observe that the Boundary Condition above (i.e condition (ii) namely \eqref{1stBC},and \eqref{2sdBC}) implies that \footnote{When a set of coordinates is fixed, we use the notation $f_{,i} =\frac{\del}{\del x_i}f$, $f_{,ij} =\frac{\del^2}{\del x_j\del x_i}f$ ... } 
\begin{align}\label{2ndBCsigma}
 \sigma= \frac{1}{2}\sum_{i,j=1}^n (-1)^{i} H_{ij,j} dx_1\wedge\dots \wedge \widehat{dx_{i}}\wedge\dots \wedge dx_n.                                                                                                                                                                                                                                                                                                           \end{align} Assuming condition \eqref{1stBC} holds condition \eqref{2sdBC} is equivalent to \eqref{2ndBCsigma}.

Thanks to Proposition~\ref{propACGTFaK} we can parametrize the space of compatible toric almost K\"ahler metrics of involutive type as $$\mAK(P,\sigma):=\{ H: \overline{P} \ra \mbox{Sym}^2(\kt^*)\,|\, H \mbox{ satisfies conditions (i), (ii) and (iii)}\}.$$ The inverse $(\pot^{ij})$ of the Hessian of symplectic potential $\pot\in\mS(P,\lab)$ can be extended as a bilinear form $H^\pot \in \mAK(P,\sigma).$ Observe also that for $H_0,H_1 \in \mAK(P,\sigma)$ we have $$H_t=tH_1+(1-t)H_0 \in \mAK(P,\sigma)\qquad \qquad \forall t\in [0,1].$$ The space $\mAK(P,\sigma)$ is then a {\it convex} infinite dimensional set of metrics.

\subsection{The extremal vector field}\label{SECextVECTORfield}
 Given a symplectic potential $\pot\in \mS(P,\lab)$ the scalar curvature of the K\"ahler metric $g_\pot$ is given by the pull back to $M$ of the following expression, called the Abreu formula \begin{equation}\label{abreuFormPOT} S(H^\pot)=-\sum_{i,j=1}^n \frac{\del^2 \pot^{ij}}{\del x_i\del x_j}\end{equation} as proved in \cite{abreu} by direct computation. The function~\eqref{abreuFormPOT} extends as a smooth function on $\overline{P}$ because the boundary condition (2) of Proposition~\ref{LOCALmetric} implies that $(\pot^{ij}) \in \Gamma(P, \kt^*\otimes \kt ^*)$ extends as a smooth bilinear form on $\overline{P}$, see \cite{H2FII}. It is shown in~\cite{lejmi} that the suitable connection one should consider in case of almost extremal metrics is the Chern connection (which do not coincides with the Levi-Civita connection in the non-K\"ahler setting). It turns out that the formulas in the toric case coincide in the sense that for $H\in \mAK(P,\sigma)$, the Hermitian scalar curvature is the pull-back of $$S(H):=-\sum_{i,j=1}^n H_{ij,ij}.$$   
 
 Calabi's extremal K\"ahler metrics are caracterized by the condition that the Hamiltonian vector field of the scalar curvature is a Killing vector field~\cite{calabi} and extremal almost K\"ahler metric are defined with the same condition on the Hermitian scalar curvature~\cite{lejmi}. Therefore, here, they correspond to the $H\in \mAK(P,\sigma)$ such that \begin{equation}\label{CalabiCDT} S(H) \in \mbox{Aff}(\kt^*,\bR).\end{equation}

 As observed by Donaldson in~\cite{don:scalar}, picking an invariant volume form $dx=dx_1\wedge ....\wedge dx_n$ on $\kt^*$, the $L^2$--projection of $S(H^\pot)$ on $\mbox{Aff}(\kt^*,\bR)$ does not depend on the choice of $\pot\in \mS(P,\lab)$. This fact holds for $H\in\mAK(P,\sigma)$ and is the effect of a more general theory of invariant developped in \cite{futaki,futakimabuchi,lejmi} which in the toric case follows from integration by parts. Indeed, using the condition (ii) of definition of $\mAK(P,\sigma)$ we have that for any $f\in \mbox{Aff}(\kt^*,\bR)$ and  $H\in\mAK(P,\sigma)$
 \begin{equation}\label{IPscal}
  \int_P S(H) f dx = 2\int_{\del P} f \sigma_\lab.\end{equation} These computations do not require the existence of a lattice containing $\lab_\sigma$, the labelling associated to $\sigma_\lab\in \mM(P)$ (see~\S\ref{notationLABpol}), or of a compact toric symplectic orbifold anywhere. Summing up these facts we get the following key result.
    \begin{proposition} \cite{futaki,futakimabuchi,don:scalar,lejmi}\label{propExtvf} For any labelled polytope $(P,\sigma)$, there exists a unique affine function $\exv_{P,\sigma} \in \mbox{Aff}(\kt^*,\bR)$ such that
   \begin{equation}\label{eq:exv}
 \int_P \exv_{P,\sigma} f dx = \int_P S(H) f dx =2\int_{\del P} f \sigma \end{equation} for any $f\in \mbox{Aff}(\kt^*,\bR)$ and any $H\in\mAK(P,\sigma)$. Moreover, if there exists  $H\in\mAK(P,\sigma)$ such that the metric $g^H$ is extremal almost K\"ahler in the sense of Calabi (and Lejmi) then \begin{equation}\label{ABREUequationH} S(H) = \exv_{P,\sigma}.\end{equation}  
  \end{proposition}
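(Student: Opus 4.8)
The plan is to reduce the whole statement to the integration-by-parts identity~\eqref{IPscal} established just above, together with elementary linear algebra on the finite-dimensional vector space $\mbox{Aff}(\kt^*,\bR)$ (of dimension $n+1$). The starting observation is that the bilinear pairing $(f,g)\mapsto \int_P fg\,dx$ is an inner product on $\mbox{Aff}(\kt^*,\bR)$: it is symmetric and positive semi-definite, and it is nondegenerate because $P$ is open with nonempty interior, so a nonzero affine function cannot vanish almost everywhere on $P$.

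For existence and uniqueness of $\exv_{P,\sigma}$, I would view $f\mapsto 2\int_{\del P} f\,\sigma$ as a linear functional on this finite-dimensional inner-product space. By the Riesz representation theorem (equivalently, nondegeneracy of the pairing) there is a unique $\exv_{P,\sigma}\in\mbox{Aff}(\kt^*,\bR)$ with $\int_P \exv_{P,\sigma}\,f\,dx = 2\int_{\del P} f\,\sigma$ for all affine $f$. Identity~\eqref{IPscal} then gives $\int_P S(H)\,f\,dx = 2\int_{\del P} f\,\sigma$ for every $H\in\mAK(P,\sigma)$, and stringing these equalities together yields the chain~\eqref{eq:exv}. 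The point worth stressing is that~\eqref{IPscal} holds for \emph{every} $H\in\mAK(P,\sigma)$, so the middle integral in~\eqref{eq:exv} does not depend on $H$; this is exactly what makes $\exv_{P,\sigma}$ an invariant of the labelled polytope $(P,\sigma)$ alone.

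For the extremal case, suppose $g^H$ is extremal, so that by definition $S(H)\in\mbox{Aff}(\kt^*,\bR)$. Then $S(H)-\exv_{P,\sigma}$ is again affine and, by the previous paragraph, it is $L^2(P,dx)$-orthogonal to all of $\mbox{Aff}(\kt^*,\bR)$; testing against $f=S(H)-\exv_{P,\sigma}$ itself and invoking nondegeneracy of the pairing forces $S(H)=\exv_{P,\sigma}$, which is~\eqref{ABREUequationH}.

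I expect no genuine obstacle here: the only substantive content lies in~\eqref{IPscal}, whose proof is the double integration by parts of $-\sum_{i,j}H_{ij,ij}$ against an affine function $f$ — the interior term vanishes since $f$ has vanishing Hessian, and the boundary contribution is pinned down exactly by the boundary conditions in \emph{(ii)} through the expression~\eqref{2ndBCsigma} for $\sigma$. That computation is already recorded above, so the remaining argument is purely formal. The only subtlety to keep in mind is that all the integrals make sense: $S(H)$ extends as an integrable (indeed smooth) function up to $\overline{P}$ thanks to the smoothness condition \emph{(i)} on $H$.
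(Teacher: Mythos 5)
Your proposal is correct and follows essentially the same route as the paper: everything is reduced to the integration-by-parts identity~\eqref{IPscal}, after which $\exv_{P,\sigma}$ is obtained by finite-dimensional linear algebra (your Riesz-representative of $f\mapsto 2\int_{\del P}f\sigma$ is exactly the paper's $L^2$-projection of $S(H)$ onto $\mbox{Aff}(\kt^*,\bR)$, and the two definitions agree precisely because of~\eqref{IPscal}), with the extremal case handled by the same orthogonality argument. No gaps; your explicit remarks on nondegeneracy of the $L^2$ pairing on affine functions and on the $H$-independence of the middle integral are exactly the points the paper leaves implicit.
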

   \begin{rem} A direct corollary of the last Proposition is that the functional $\mL_{(P,\sigma)}$ vanishes identically on affine-linear function.   
  \end{rem}
  \begin{rem} The function $\exv_{P,\sigma}$ depends linearly on $\sigma\in\mM(P)$.   
  \end{rem}

 \subsection{Extremal K\"ahler metrics unicity and an open condition}  
   Uniqueness of extremal toric K\"ahler metric in a given class for a fixed torus is not an issue thanks to the proof of Guan in~\cite{guan}, using the convexity of the K--energy functional over geodesics. His proof works very well on symplectic potentials in $\mS(P,\lab)$ as soon as $\overline{P}$ is compact using the works of~\cite{don:scalar}, see e.g.~\cite[\S2.2.1]{TGQ}, because $\mS(P,\lab)$ is a convex set with respect to smooth geodesics for the Mabuchi metric (which, here, are the affine lines $(1-t)\pot_0 +t\pot_1$) defined on the space of K\"ahler metrics~\cite{guan}. Therefore, we get the following unicity result. 
   
   \begin{proposition}
   Let $(P,\lab)$ be a labelled polytope. If $\pot_0,\pot_1 \in \mS(P,\lab)$ satisfy $S(\pot_0)=S(\pot_1) = \exv_{P,\lab}$ then $\pot_1-\pot_0$ is the restriction to $P$ of an affine linear function on $\kt^*$.
   \end{proposition}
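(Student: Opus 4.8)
The plan is to prove uniqueness by exploiting the convexity of the Mabuchi K-energy along the affine path $\pot_t = (1-t)\pot_0 + t\pot_1$, which — as the paragraph preceding the statement recalls — coincides with the smooth geodesics in $\mS(P,\lab)$ for the Mabuchi metric. The key point is that the critical points of the K-energy (relative to the torus $T$, so really the relative K-energy whose Euler--Lagrange equation is precisely $S(\pot)=\exv_{P,\lab}$) are exactly the extremal symplectic potentials, and that the relevant functional is convex along these affine paths. Since $\pot_0$ and $\pot_1$ are both assumed to satisfy the extremal equation $S(\pot_i)=\exv_{P,\lab}$, they are both critical points.

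First I would recall Donaldson's formula for the first variation of the (relative) K-energy $\mathcal{E}$ along a path $\dot{\pot}=\psi$, which can be written in the toric setting as
\begin{equation}
\frac{d}{dt}\mathcal{E}(\pot_t) = -\int_P \bigl(S(\pot_t)-\exv_{P,\lab}\bigr)\,\psi\,dx,
\end{equation}
the boundary terms being absorbed by the defining relation \eqref{IPscal} of $\exv_{P,\lab}$ (this is exactly what makes the projected scalar curvature $\exv_{P,\lab}$ the right normalization). Differentiating once more along the affine path, with $\psi=\pot_1-\pot_0$ constant in $t$, gives the second variation
\begin{equation}
\frac{d^2}{dt^2}\mathcal{E}(\pot_t) = \int_P \sum_{i,j,k,l}\pot_t^{ik}\psi_{,kl}\pot_t^{lj}\psi_{,ij}\,dx \geq 0,
\end{equation}
which is nonnegative and vanishes if and only if $\psi_{,kl}\equiv 0$, i.e. $\mathrm{Hess}\,\psi = 0$ on $P$, because each $\pot_t$ is strictly convex so $(\pot_t^{ij})$ is positive definite. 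This is the convexity asserted by Guan, and I would simply invoke it together with Donaldson's integration-by-parts identities rather than rederive it.

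Now the endpoints are both critical, so $\frac{d}{dt}\mathcal{E}(\pot_t)$ vanishes at $t=0$ and $t=1$. Since $\frac{d^2}{dt^2}\mathcal{E}(\pot_t)\ge 0$, the first derivative $\frac{d}{dt}\mathcal{E}(\pot_t)$ is nondecreasing in $t$; vanishing at both endpoints forces it to vanish identically on $[0,1]$, hence $\frac{d^2}{dt^2}\mathcal{E}(\pot_t)\equiv 0$. By the above this means $\mathrm{Hess}(\pot_1-\pot_0)\equiv 0$ throughout $P$, so $\psi=\pot_1-\pot_0$ is the restriction to $P$ of an affine-linear function on $\kt^*$, as claimed.

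The main obstacle I anticipate is justifying the integrations by parts and the finiteness of all the integrals near $\del P$: the symplectic potentials blow up logarithmically along the facets (through the Guillemin term $\pot_{\lab}$), and the inverse Hessians $\pot_t^{ij}$ degenerate there in the normal directions. One must check that $\psi=\pot_1-\pot_0$ extends smoothly up to $\overline{P}$ — which it does, since by condition (2) of Proposition~\ref{LOCALmetric} both $\pot_i-\pot_{\lab}$ are smooth on a neighborhood of $\overline{P}$, so the singular parts cancel and $\psi$ is genuinely smooth on $\overline{P}$ — and that the boundary contributions in the first- and second-variation formulas are controlled by the boundary behavior encoded in $\sigma$. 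This is precisely the analytic content worked out in \cite{don:scalar} and used in \cite[\S2.2.1]{TGQ}, so the cleanest route is to cite those references for the convexity and well-posedness of $\mathcal{E}$ on $\mS(P,\lab)$ and to reduce the uniqueness statement to the rigidity case of that convexity, rather than to treat the boundary analysis from scratch.
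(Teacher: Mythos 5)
Your proposal is correct and takes essentially the same route as the paper: the paper's proof consists precisely of invoking Guan's convexity-of-the-K-energy argument along the affine paths $(1-t)\pot_0+t\pot_1$ (which are the smooth Mabuchi geodesics in $\mS(P,\lab)$), with the boundary/integrability analysis deferred to \cite{don:scalar} and \cite[\S2.2.1]{TGQ}. Your explicit first- and second-variation computation, the rigidity step forcing $\mathrm{Hess}(\pot_1-\pot_0)\equiv 0$, and the observation that $\pot_1-\pot_0$ extends smoothly to $\overline{P}$ are exactly the content of that cited argument.
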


Donaldson proved in~\cite{don:extMcond} that the set of labelling $\lab\in \mN(P)$ for which the Abreu's equation has a solution is open in $\mN(P)$, the $d$--dimensional open cone of labellings of $P$ in $\kt^d$. 

  \begin{proposition}[Donaldson~\cite{don:extMcond}]\label{DonPropOPEN} Let $(P,\sigma)$ be a labelled polytope. Assume that there is a potential $\pot \in \mS(P,\lab_{\sigma})$ satisfying the Abreu equation. Then there exists an open neighborhood $U\subset \mM(P)$ of $\sigma$ such that for each $\tilde{\sigma}\in U$ there exists a potential $\tilde{\pot} \in \mS(P,\lab_{\tilde{\sigma}})$ satisfying the Abreu equation.
  \end{proposition}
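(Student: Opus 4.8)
The plan is to prove this openness statement by the implicit function theorem applied to the Abreu equation, viewed as a smooth map between Banach spaces depending on the parameter $\sigma$. The first step is to remove the $\sigma$--dependence of the boundary conditions. For $\tilde\sigma$ near $\sigma$ I would write a candidate potential as $\pot = \pot_{\lab_{\tilde\sigma}} + f$, where $\pot_{\lab_{\tilde\sigma}}$ is the Guillemin potential of the labelling $\lab_{\tilde\sigma}$ (see \eqref{GuilleminPotential}) and $f$ ranges over a \emph{fixed} space of functions smooth up to $\overline P$ (a weighted Hölder space $C^{4,\alpha}$ adapted to the boundary, independent of $\tilde\sigma$). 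Since $\lab_\sigma$ depends smoothly---indeed linearly---on $\sigma$ and $\pot_\lab$ depends smoothly on the labels, the map $\tilde\sigma \mapsto \pot_{\lab_{\tilde\sigma}}$ is smooth, and by Proposition~\ref{LOCALmetric} the decomposition $\pot = \pot_{\lab_{\tilde\sigma}}+f$ with $f$ smooth on $\overline P$ and $\pot$ strictly convex exactly parametrizes $\mS(P,\lab_{\tilde\sigma})$; strict convexity is an open condition, hence automatically preserved for $f$ near $f_0 := \pot - \pot_{\lab_\sigma}$ and $\tilde\sigma$ near $\sigma$.

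Second, I would set up the nonlinear operator. Let $W := \{\phi \mid \int_P \phi\, h\, dx = 0 \ \forall h \in \mbox{Aff}(\kt^*,\bR)\}$ be the $L^2(dx)$--orthogonal complement of the affine functions, and define
$$\mathcal{F}(f,\tilde\sigma) := S\!\left(H^{\pot_{\lab_{\tilde\sigma}}+f}\right) - \exv_{P,\tilde\sigma}.$$
By hypothesis $\mathcal{F}(f_0,\sigma)=0$. Crucially, the integration-by-parts identity \eqref{IPscal} together with the defining relation \eqref{eq:exv} for $\exv_{P,\tilde\sigma}$ shows that $\mathcal{F}$ takes values in $W$: for every admissible $f$ one has $\int_P S(H^{\pot_{\lab_{\tilde\sigma}}+f}) h\, dx = 2\int_{\del P} h\, \tilde\sigma = \int_P \exv_{P,\tilde\sigma}\, h\, dx$ for all affine $h$. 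Since $\exv_{P,\tilde\sigma}$ depends linearly (hence smoothly) on $\tilde\sigma$ and $S(H^\pot)$ depends smoothly on $f$, the map $\mathcal{F}$ is smooth from a neighbourhood in $C^{4,\alpha}\times \mM(P)$ into $W\cap C^{0,\alpha}$.

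Third, I would analyze the partial linearization $L:=D_f\mathcal{F}(f_0,\sigma)$, the linearized Abreu operator at the extremal potential $\pot$. Differentiating $S(H)=-\sum_{i,j} H_{ij,ij}$ together with $H=(\mbox{Hess}\,\pot)^{-1}$ in a direction $\psi$ yields a fourth-order, formally self-adjoint, degenerate-elliptic operator, and the same differentiation of \eqref{IPscal} confirms that $L$ maps into $W$. Its kernel is computed by integration by parts: $\int_P \psi\, L\psi\, dx$ equals a manifestly nonnegative quadratic expression in $\mbox{Hess}\,\psi$ (the toric second variation of the Calabi functional), vanishing precisely when $\mbox{Hess}\,\psi=0$, that is when $\psi\in \mbox{Aff}(\kt^*,\bR)$; this is the infinitesimal form of the uniqueness statement established just above via Guan's convexity. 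Self-adjointness then identifies the cokernel with $\ker L = \mbox{Aff}(\kt^*,\bR)$, so that $L$ descends to an isomorphism $C^{4,\alpha}/\mbox{Aff}(\kt^*,\bR)\to W\cap C^{0,\alpha}$. Adding an affine function to $\pot$ changes neither $H^\pot$ nor $S(H^\pot)$, so passing to this quotient is harmless.

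Finally, the implicit function theorem yields, for each $\tilde\sigma$ in a neighbourhood $U\subset\mM(P)$ of $\sigma$, a function $f(\tilde\sigma)$ (unique modulo affine) with $\mathcal{F}(f(\tilde\sigma),\tilde\sigma)=0$; then $\tilde\pot:=\pot_{\lab_{\tilde\sigma}}+f(\tilde\sigma)$ lies in $\mS(P,\lab_{\tilde\sigma})$ and satisfies $S(H^{\tilde\pot})=\exv_{P,\tilde\sigma}$, which by Proposition~\ref{propExtvf} is the Abreu equation. Elliptic bootstrapping for the Abreu equation (Donaldson's interior and boundary estimates) upgrades the $C^{4,\alpha}$ solution to a genuine smooth symplectic potential. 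I expect the main obstacle to be entirely analytic and concentrated at $\del P$: the Abreu operator degenerates on the boundary (the metric $H$ has vanishing normal part there), so ordinary elliptic theory on $\overline P$ does not apply. One must work in weighted Hölder spaces tailored to this degeneration and verify that, in those spaces, $L$ is genuinely Fredholm of index zero with kernel and cokernel exactly the affine functions---this, rather than the formal scheme above, is where the real work lies and is precisely what Donaldson carries out in \cite{don:extMcond}.
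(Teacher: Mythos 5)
Your proposal is correct in outline and shares its skeleton with the argument the paper invokes: both are applications of the implicit function theorem to $\pot\mapsto S(H^\pot)$, with kernel and cokernel of the linearization identified with $\mbox{Aff}(\kt^*,\bR)$ (your integration-by-parts computation of $\int_P \psi L\psi\, dx$ is the standard one, and is indeed the infinitesimal version of the uniqueness proposition proved via Guan's convexity). Where you genuinely diverge from the paper is at the point you yourself flag as ``the real work''. The paper does not propose weighted H\"older spaces on $\overline{P}$: following Donaldson~\cite{don:extMcond}, it observes that the degeneration of the Abreu operator at $\del P$ is an artifact of the action-angle coordinates, and that the system of local toric charts attached to the vertices (\S\ref{subsubSECTlocalCHARTS}) provides the needed ``compactification'': in those charts $g_\pot$ is a smooth metric, the linearization is a classical non-degenerate elliptic operator, and ordinary Schauder/Fredholm theory applies; the details are developed in~\cite{RKE_legendre}. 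The chart route buys standard elliptic theory at the price of leaving the polytope picture; your route stays on $\overline{P}$ but must then build the Fredholm theory of a degenerate operator from scratch, and it hides a trap you pass over quickly: for $\tilde\sigma\neq\sigma$ one has $\lab_{\tilde\sigma,s}=a_s\lab_{\sigma,s}$, hence
$\pot_{\lab_{\tilde\sigma}}-\pot_{\lab_{\sigma}}=\frac{1}{2}\sum_s \bigl((a_s-1)\,\ell_{\lab_\sigma,s}\log \ell_{\lab_\sigma,s}+a_s(\log a_s)\,\ell_{\lab_\sigma,s}\bigr)$,
which is \emph{not} smooth up to $\del P$; so the asserted smoothness of $(f,\tilde\sigma)\mapsto \mathcal{F}(f,\tilde\sigma)$ on a fixed Banach space is precisely the kind of statement your weighted spaces would have to be engineered to make true. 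In the chart picture this same phenomenon is transparent: varying $\sigma$ changes the cone angle of the metric along the invariant divisors (cf.\ \S\ref{sectANGLEcone}), which is what the analysis in~\cite{RKE_legendre} is set up to handle.
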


  The statement in~\cite{don:extMcond} is not exactly the one above but the proof works in this degree of generality. The argument is standard. The linearisation of $\pot\mapsto S(H^\pot)$ is an elliptic operator. To get around the lack of compacity of $P$, Donaldson argue that the system of charts associated to the vertices, see~\S\ref{subsubSECTlocalCHARTS}, provides the kind of compactification needed. This idea is developped with details in~\cite{RKE_legendre}.

\section{Uniform $K$--stability and Extremal almost K\"ahler metrics}\label{sectMAIN}

\subsection{Uniform $K$--stability and Chen--Li--Sheng result}
  Consider the functional $$\mL_{(P,\sigma)}(f)= \int_{\partial P} f\sigma -\frac{1}{2}\int_P f\exv_{\sigma} dx$$ which can be defined on various spaces of functions on $\overline{P}$, for example $C^0(\overline{P})$. From Proposition~\ref{propExtvf} we get that $\mL_{(P,\sigma)}$ vanishes identically on the space of affine-linear functions.

Following~\cite{don:scalTORICstab}, we define the set $\mC_{\infty}$ of continuous convex function on $\overline{P}$ which are smooth on the interior, we have $\mS(P,\sigma) \subset \mC_{\infty}$ for all $\sigma \in \mM(P)$. We fix $p_o\in P$, the set of a normalized functions is $$\tilde{\mC}:=\{f \in \mC_{\infty}\,|\, f(p)\geq f(p_o) =0\qquad \forall p\in P\}.$$ Note that the only affine-linear function in $\tilde{\mC}$ is the trivial one. 

\begin{definition}\label{defUKS} A labelled polytope $(P,\sigma)$ is \emph{uniformly $K$--stable} if there exists $\lambda > 0$ such that $$\mL_{(P,\sigma)}(f) \geq \lambda \int_{\partial P}  f\sigma $$ for any $f\in\tilde{\mC}$.  
\end{definition}

\begin{rem}\label{remTESTconfig}
 Let $\mathcal{T}(P)$ be the set of continuous piecewise linear convex functions on $\overline{P}$, that is $f\in \mathcal{T}(P)$ if there are $f_1,\dots, f_m\in\mbox{Aff}(\kt^*,\bR)$ such that $f(x)=\max\{f_1(x),\dots, f_m(x)\}$ for $x\in \overline{P}$.  Given a lattice $\Lambda\subset \kt$, we define $\mathcal{T}(P,\Lambda)\subset \mathcal{T}(P)$, the set of continuous piecewise linear convex functions on $P$ taking integral values on the dual lattice $\Lambda^*\subset\kt^*$. When $(P,\eta,\Lambda)$ is rational Delzant and its vertices lie in the dual lattice $\Lambda^*\subset\kt^*$, the associated symplectic manifold $(M,\omega)$ is rational (that is $[\omega] \in H_{dR}^2(M,\bQ)$) and for any compatible toric complex structure $J$ on $M$ the K\"ahler manifold $(M,J,k[\omega])$ (for some $k$ big enough) is polarized by a line bundle $L^k\ra M$. In this situation, Donaldson presents in \cite{don:scalTORICstab} a way to associate a test configuration $(\mX_f,\mL_f)$ over $(M,L)$ to any function $f\in \mathcal{
T}(P,\Lambda)$ such that the Donaldson--Futaki invariant of $(\mX_f,\mL_f)$ coincides, up to a positive multiplicative constant, with $\mL_{(P,\sigma)}(f)$. These test configurations are called {\it toric degenerations} in \cite{don:scalTORICstab} and \cite{zz}. The Yau--Tian--Donaldson conjecture predicts that if $\exv_{P,\lab}$ is a constant and there exists a solution $\pot\in\mS(P,\lab)$ of the Abreu equation \eqref{eqABREUintro} then $$\mL_{(P,\sigma)}(f)\geq 0$$ for any $f\in \mathcal{T}(P,\Lambda)$ with equality if and only $f$ is affine-linear. \end{rem}

%

Observe that the map $f\mapsto \int_{\partial P} f\sigma$ is a norm on $\tilde{\mC}$. Therefore, Definition~\ref{defUKS} coincides with the notion of uniform $K$--stability in the sense of Sz\'ekelyhidi~\cite{szekelyhidiTHESIS} but with a different norm and adapted to the toric situation.
Moreover, this is the notion of stability in Definition~\ref{defUKS} that Chen--Li--Sheng used in~\cite{CLS} to prove that 
\begin{theorem}\label{theoCLS}\cite{CLS} If $(P,\sigma)$ is a labelled polytope and that there exists a solution $\pot\in\mS(P,\sigma)$ of the Abreu equation  \eqref{eqABREUintro} then $(P,\sigma)$ is uniformly $K$--stable.  
\end{theorem}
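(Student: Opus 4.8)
The plan is to reduce the statement to a single integration-by-parts identity, which makes $K$-\emph{semi}stability immediate, and then to promote the resulting non-strict inequality to the uniform one by a compactness argument; the genuine difficulty is entirely in the latter step.

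First I would record the Donaldson integration-by-parts formula. For any $H\in\mAK(P,\sigma)$ and any $f$ smooth on $\overline P$, integrating $S(H)=-\sum_{i,j}H_{ij,ij}$ against $f$ twice and using the boundary conditions (ii) together with \eqref{2ndBCsigma} to evaluate the boundary terms gives
\begin{equation*}
\int_P S(H)\,f\,dx \;=\; 2\int_{\partial P} f\,\sigma \;-\; \int_P \sum_{i,j}H_{ij}\,f_{,ij}\,dx,
\end{equation*}
which specializes to \eqref{IPscal} when $f$ is affine. Applying this to the extremal solution $H^\pot=(\pot^{ij})$, for which $S(H^\pot)=\exv_{P,\sigma}$ by hypothesis, and inserting it into the definition of $\mL_{(P,\sigma)}$, the two boundary terms cancel and I obtain the key identity
\begin{equation*}
\mL_{(P,\sigma)}(f)\;=\;\tfrac12\int_P \sum_{i,j}\pot^{ij}\,f_{,ij}\,dx .
\end{equation*}
Since $H^\pot$ is positive definite on the interior by condition (iii) and $(f_{,ij})$ is positive semidefinite for $f$ convex, the integrand is nonnegative; after extending the identity from smooth functions to all of $\tilde{\mC}$ by exhausting $P$ with interior subpolytopes and passing to the limit, this already gives $\mL_{(P,\sigma)}(f)\ge 0$, that is, $K$-semistability.

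To obtain the uniform constant $\lambda>0$ I would argue by contradiction. If none exists, there is a sequence $f_k\in\tilde{\mC}$ with $\int_{\partial P}f_k\,\sigma=1$ and $\mL_{(P,\sigma)}(f_k)=\tfrac12\int_P\pot^{ij}(f_k)_{,ij}\,dx\to 0$. The normalization $f_k\ge 0$, $f_k(p_o)=0$, convexity, and the bounded boundary mass yield, through a Blaschke-type selection, a subsequence converging locally uniformly on the interior to a convex $f_\infty\in\tilde{\mC}$. Because $H^\pot$ is strictly positive on each compact subset of $P$, the vanishing of the energy forces the Laplacian (Monge--Amp\`ere) mass of $f_\infty$ to vanish on every compact set, so $f_\infty$ is affine on the interior and hence, being normalized, $f_\infty\equiv 0$.

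The main obstacle---and precisely what separates uniform stability from semistability---is that local uniform convergence in the interior does not by itself control the boundary integral: a priori $1=\lim_k\int_{\partial P}f_k\,\sigma$ carries no information about $\int_{\partial P}f_\infty\,\sigma=0$, because the mass of $f_k$ may concentrate in a thin collar near $\partial P$. The heart of the Chen--Li--Sheng argument is to exclude this, i.e. to show that no boundary mass is lost in the limit, so that $\int_{\partial P}f_k\,\sigma\to\int_{\partial P}f_\infty\,\sigma$ and hence $1=0$, a contradiction. This requires quantitative control of normalized convex functions up to $\partial P$---for instance a uniform estimate of the schematic form $\int_{\partial P}f\,\sigma\le C\big(\int_P f\,dx+\mL_{(P,\sigma)}(f)\big)$ confining the boundary mass by interior data and the energy---and it is here that the prescribed degeneration of $H^\pot$ along $\partial P$ encoded in condition (ii) and \eqref{2ndBCsigma}, together with the interior a priori estimates for the extremal potential $\pot$, must be used in an essential way; by contrast the identity of the first step and the interior compactness are comparatively routine. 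Following Donaldson one may first reduce the test to a smaller class of convex functions, but the same boundary difficulty persists.
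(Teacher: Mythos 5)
Your first half is sound and is exactly the part of the Chen--Li--Sheng argument that the paper itself reproduces in its proof of Proposition~\ref{propExtAlm_UKSintro}: your integration-by-parts identity is the paper's \eqref{eqFutHESS} (your factor $\tfrac12$ is in fact the correct one for the paper's normalization $\int_P\exv_{P,\sigma}f\,dx=2\int_{\partial P}f\sigma$), and your interior step --- positive definiteness of $H^{\pot}$ on compact subsets of $P$ together with weak convergence of Monge--Amp\`ere measures forcing a low-energy limit to be affine --- is precisely Lemma 5.1 of \cite{CLS} as quoted there. This yields $K$-semistability, $\mL_{(P,\sigma)}\geq 0$ on $\tilde{\mC}$.

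The gap is the step you yourself label ``the heart of the Chen--Li--Sheng argument'': you never prove it, and it is the entire content of the theorem beyond semistability. Two concrete problems. First, the schematic estimate you invoke, $\int_{\partial P}f\sigma\leq C\bigl(\int_P f\,dx+\mL_{(P,\sigma)}(f)\bigr)$, is for normalized convex functions essentially a restatement of uniform $K$-stability itself (via Donaldson's equivalence of the boundary-norm and $L^1$-norm formulations), so assuming it is circular. Second, your contradiction scheme needs $\int_{\partial P}f_k\sigma\to\int_{\partial P}f_\infty\sigma$, and locally uniform interior convergence can never supply this on its own: normalized piecewise-linear ramps concentrating in a shrinking collar near $\partial P$ converge to $0$ locally uniformly while keeping boundary mass equal to $1$. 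Excluding such concentration when $\mL_{(P,\sigma)}(f_k)\to 0$ requires a quantitative use of the exact degeneration rate of $H^{\pot}$ along $\partial P$ (condition (ii) of \S\ref{sectDEFalmostK}, $dH_y(\lab_s,\lab_s)=2\lab_s$), which pairs against the Hessian mass of a concentrating sequence to produce a definite positive contribution; you point at this but never carry it out. Chen--Li--Sheng in fact do not attack the mass-loss question head-on the way you propose: they first prove, as a general fact requiring no solution of \eqref{eqABREUintro}, that uniform $K$-stability of $(P,\sigma)$ is equivalent to $\mL_{(P,\sigma)}\geq 0$ on a compactification $\mC_*^K$ of $\tilde{\mC}$, so that the uniform constant is encoded once and for all in nonnegativity on limit objects; only then does the interior Monge--Amp\`ere bound (the sole place the solution $\pot$ enters) combined with the concluding argument of their Section 5 finish the proof. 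Without either that equivalence or an honest proof of your boundary estimate, what you have established is semistability, not Theorem~\ref{theoCLS}.
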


\begin{proof}[Proof of Proposition~\ref{propExtAlm_UKSintro}] Our Proposition~\ref{propExtAlm_UKSintro} follows by observing that in the proof of the last Theorem, Chen--Li--Sheng only use the fact that the Hessian and inverse Hessian $H^\pot$ of the solution $\pot\in\mS(P,\sigma)$ are positive definite on the interior of $P$. One important step for their proof is to show that : {\it a labelled polytope $(P,\sigma)$ is uniformly $K$--stable if and only if $\mL_{(P,\sigma)}(f) \geq 0$ on some compactification $\mC_*^K$ of} $\tilde{\mC}$. But this is general and does not need any hypothesis on the existence of a solution of the Abreu equation. This latter hypothesis is only needed for Lemma 5.1 of~\cite{CLS}. The crucial observation is the following, if $H : \overline{P} \ra \mbox{Sym}^2(\kt^*)$ satisfies equation~\eqref{eqABREUintroH}, that is $S(H) = -\sum_{i,j=1}^n H_{ij,ij} = \exv_{(P,\sigma)}$ then the boundary conditions (ii) of \S\ref{sectDEFalmostK} implies that
\begin{equation}\label{eqFutHESS}
 \mL_{(P,\sigma)}(f) = \int_P \langle H,\mbox{Hess} f \rangle dx  
\end{equation} whenever $f$ is twice differentiable. Formula~\eqref{eqFutHESS} goes back to~\cite{don:scalTORICstab}. 

Therefore, let $H$ be a solution of equation~\eqref{eqABREUintroH}, then for any interval $I\subset\subset P$ and sequence of convex functions $f_k\in \mC_\infty \subset C^{\infty}(P)$ converging locally uniformly to $f$ then we have, using~\eqref{eqFutHESS} and  weak convergence of Monge--Amp\`ere measures, that $$\mL_{(P,\sigma)}(f_k) \geq \tau m_I(f)$$ where $m_I(f)$ is the Monge--Amp\`ere measure induced by $f$ on $I$ and $\tau$ is a positive constant independant of $k$. This is the claim of Lemma 5.1 of~\cite{CLS} from which one can derive Proposition~\ref{propExtAlm_UKSintro} using the same argument than~\cite{CLS} in the last paragraph of their section 5.       
\end{proof}

\subsection{Uniform $K$--stability implies the existence of an extremal toric K\"ahler metric}\label{subsectCHENCHENGHE}

In this paragraph we will put together the work of Donaldson~\cite{don:scalTORICstab}, He in~\cite{WHe} and Zhou--Zhu~\cite{zz} to prove that 

\begin{proposition}\label{theoUKs_exist} Let $(P,\sigma)$ be a compact convex simple polytope. If $(P,\lab)$ is uniformly K--stable then there exists $\pot \in \mS(P,\sigma) $ such that $$S(H^\pot)= \exv_{(P,\sigma)}.$$  
\end{proposition}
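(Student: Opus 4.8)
The plan is to run the standard variational/continuity scheme in symplectic (Abreu) coordinates, using uniform $K$--stability to force properness of the relative Mabuchi energy and then importing the a priori estimates of Chen--Cheng~\cite{chencheng3} and He~\cite{WHe} to produce a smooth minimizer. First I would recall the Donaldson dictionary from~\cite{don:scalTORICstab}: on $\mS(P,\sigma)$ the relative Mabuchi $K$--energy $\mathcal{M}$ splits as an entropy term $-\int_P\log\det(\mbox{Hess}\,\pot)\,dx$ plus the linear term $\mL_{(P,\sigma)}(\pot)$, its Euler--Lagrange equation is precisely the Abreu equation $S(H^\pot)=\exv_{P,\sigma}$, and along the affine rays $\pot_t=\pot_0+tf$ its asymptotic slope equals the Donaldson--Futaki invariant $\mL_{(P,\sigma)}(f)$ of the toric degeneration attached to $f$ (Remark~\ref{remTESTconfig}). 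Proposition~\ref{propExtvf} guarantees that the extremal affine function $\exv_{P,\sigma}$ is the correct normalization, so that $\mathcal{M}$ is genuinely the \emph{relative} energy whose critical points are extremal and not merely cscK.

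Second, I would upgrade the uniform stability hypothesis of Definition~\ref{defUKS} to coercivity of $\mathcal{M}$. By the slope computation of the first step, the asymptotic slope of $\mathcal{M}$ along every affine ray $\pot_t=\pot_0+tf$ equals $\mL_{(P,\sigma)}(f)\geq\lambda\int_{\partial P}f\sigma$; since $f\mapsto\int_{\partial P}f\sigma$ is a norm on $\tilde{\mC}$ and $\mathcal{M}$ is convex along these segments (they are the smooth Mabuchi geodesics of~\cite{guan}), this forces $\mathcal{M}(\pot)\to+\infty$ along every ray at a uniformly positive rate. Turning this directional statement into global coercivity $\mathcal{M}(\pot)\geq\delta\|\pot\|-C$ modulo $\mbox{Aff}(\kt^*,\bR)$ is where Donaldson's identification of uniform $K$--stability with $L^1$--stability~\cite[Prop.~5.2.2]{don:scalTORICstab} fixes the correct norm, and Zhou--Zhu~\cite{zz} supply the passage from the one-dimensional slope estimate to coercivity over the whole space of potentials while controlling the behaviour near $\partial P$ imposed by conditions (i),(ii) of~\S\ref{sectDEFalmostK}.

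Third, I would feed this coercivity into the existence theory. In the cscK case ($\exv_{P,\sigma}$ constant), properness together with the interior and boundary a priori estimates of Chen--Cheng~\cite{chencheng3} yields a smooth solution; in the genuinely extremal case the same conclusion follows from He's theorem~\cite{WHe}, whose a priori estimates for the fourth-order extremal equation relative to the extremal vector field close the continuity method of~\cite{zz}. Elliptic regularity, read in the charts of~\S\ref{subsubSECTlocalCHARTS}, then promotes the resulting solution to an honest symplectic potential $\pot\in\mS(P,\sigma)$ satisfying the boundary conditions of Proposition~\ref{LOCALmetric} and solving $S(H^\pot)=\exv_{P,\sigma}$, as required.

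The main obstacle is the second step: making rigorous the implication ``uniform $K$--stability $\Rightarrow$ properness of $\mathcal{M}$''. The functional $\mL_{(P,\sigma)}$ is controlled only on the convex cone $\tilde{\mC}$, whereas properness must hold over all potentials and uniformly up to $\partial P$, where $P$ is noncompact in the natural toric charts; in the extremal case one must moreover carry out the whole argument relative to the extremal vector field, with $\exv_{P,\sigma}$ (canonical by Proposition~\ref{propExtvf}) as the correct normalization. Reconciling this combinatorial positivity with analytic coercivity is precisely the point at which~\cite{don:scalTORICstab},~\cite{zz} and~\cite{WHe} must be combined; once properness is secured, the a priori estimates are imported as a black box from He and Chen--Cheng.
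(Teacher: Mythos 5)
Your skeleton is the same as the paper's: invoke He's theorem (an extremal metric exists if and only if the modified Mabuchi $K$--energy is bounded below and proper for the Darvas distance $d_{1,G_0}$), obtain the lower bound and a growth estimate from Donaldson~\cite{don:scalTORICstab} and Zhou--Zhu~\cite{zz}, and import the Chen--Cheng/He estimates as a black box. But the step you yourself flag as ``the main obstacle'' --- passing from uniform $K$--stability to properness with respect to $d_{1,G_0}$ --- is precisely the content of the paper's proof, and your proposal never carries it out. Directional coercivity along affine rays $\pot_0+tf$ is not enough: He's theorem demands properness for a \emph{specific} metric on the space of K\"ahler potentials, and neither Donaldson's Proposition 5.2.2 nor Zhou--Zhu's Lemma 2.3 says anything about that metric. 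What Zhou--Zhu give is $\mF_{(P,\sigma)}(\pot)\geq C\int_P \pot\, dx - D$ for normalized $\pot\in\widetilde{\mC}$; the missing link is a bridge between the toric quantity $\int_P \pot\, dx$ and the Darvas distance.

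The paper supplies exactly this bridge by a short explicit computation: for normalized potentials $\pot_0,\pot_1\in\mS(P,\sigma)\cap\widetilde{\mC}$, the affine segment $\pot_t=t\pot_1+(1-t)\pot_0$ has, after Legendre transform and the change to complex coordinates, $L^1$--length $\frac{1}{(2\pi)^n}\int_0^1\int_M|\dot{\phi}_t|\,\omega_{\phi_t}^n\, dt$, whence
$$\int_P|\pot_0|\,dx+\int_P|\pot_1|\,dx \ \geq\ \int_P |\pot_1-\pot_0|\,dx \ \geq\ d_{1}(\psi_0,\psi_1)\ \geq\ d_{1,G_0}(\psi_0,\psi_1),$$
where $\psi_i$ is the K\"ahler potential corresponding to $\pot_i$. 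Thus $d_{1,G_0}(\psi_{\pot_0},\psi_{\pot_{1,k}})\to\infty$ forces $\int_P\pot_{1,k}\,dx\to\infty$, and Zhou--Zhu then give $\mF_{(P,\sigma)}(\pot_{1,k})\to+\infty$, i.e.\ properness in He's sense; combined with Donaldson's Lemma 3.2 (the lower bound on $\widetilde{\mC}$), this closes the argument. Without this comparison, or some substitute for it, your Step 2 remains a statement of intent, so the proposal as written has a genuine gap. Two smaller points: the proposition concerns an arbitrary compact simple labelled polytope, possibly irrational, so there may be no compact manifold $M$ on which to quote He verbatim --- the paper handles this by noting, following \cite{RKE_legendre}, that the relevant estimates localize in the complex charts attached to the vertices and use only compactness of $\overline{P}$; and Zhou--Zhu's contribution here is the properness lemma just cited, not a continuity method as your Step 3 suggests.
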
 

Given a compact group $K\subset \mbox{Aut}_0(M,J)$ containing the extremal vector field (the Hamiltonian Killing version of it~\cite{futakimabuchi}) in its Lie algebra center and a fixed $J$--compatible $K$--invariant K\"ahler metric $\omega$, one can define the {\it modified Mabuchi K--energy} as a functional $\mK$ on the space of $K$--invariant K\"ahler potentials $\mH_K:=\{\phi \in\mC^\infty(M)^K\,|\, \omega + dd^c\phi >0\}$. This functional is important because it detects the $K$--invariant extremal K\"ahler metrics in $(M,J,[\omega])$. Let $K=K_0$ be a compact subgroup of $\mbox{Aut}_0(M,J)$ whose complexified Lie algebra $\mathfrak{h}_0$ is the reduced part of $\mathfrak{h}:=\mbox{Lie}\mbox{Aut}_0(M,J)$. Denote $G_0$ the complexification of $K_0$ in $\mbox{Aut}_0(M,J)$. An important ingredient in this theory is a certain distance $d_{1,G_0}$ on $\mH_K$ introduced by Darvas~\cite{darvas} and corresponding to the $L^1$--norm on $T_{\phi}\mH_{K_0}$. That is for $\psi \in T_{\phi}\mH_{K_0}$, the norm $\int_M |\psi| \omega_{\phi}^n$ allows to compute the lenght of curves and then $d_{1}(\phi_0,\phi_1)$ is the infimum of the lenght of the curves joining $\phi_0$ and $\phi_1$. Then $d_{1,G_0}(\phi_0,\phi_1)=\inf_{g\in G_0} d_{1}(\phi_0,g^*\phi_1)$.   

\begin{theorem}\label{HeTheorem}[Theorem 4 of He~\cite{WHe}] There is a $K_0$--invariant extremal K\"ahler metrics in $(M,J,[\omega])$ if and only if the modified Mabuchi K--energy is bounded below on $\mH_{K_0}$ and proper with respect to $d_{1,G_0}$. 
\end{theorem}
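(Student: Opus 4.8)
The plan is to establish the two implications separately, following the variational strategy that couples the metric geometry of the space of K\"ahler potentials with the a priori estimates for the extremal equation. Throughout one works in the completion $\mE^1$ of $\mH_{K_0}$ for Darvas' distance $d_1$~\cite{darvas}, a complete geodesic metric space on which the modified Mabuchi K--energy $\mK$ extends lower semicontinuously and is convex along finite-energy geodesics (the relative version of the Berman--Berndtsson convexity). Both $\mK$ and the pseudo-distance $d_{1,G_0}$ are $G_0$--invariant, so the natural arena for the properness statement is the quotient $\mE^1/G_0$.

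First I would treat the direction \emph{existence $\Rightarrow$ bounded below and proper}. Boundedness below is immediate: a $K_0$--invariant extremal metric is a critical point of $\mK$, and by convexity along $d_1$--geodesics every critical point is a global minimizer, so $\mK \geq \mK(\phi_0)$ for an extremal potential $\phi_0$. For properness I would invoke the abstract principle of Darvas--Rubinstein as refined by Berman--Darvas--Lu: once one knows (i) that $\mK$ is $G_0$--invariant and convex, (ii) that its minimizers form exactly one $G_0$--orbit, which is the uniqueness of extremal metrics modulo reduced automorphisms, and (iii) that $d_1$--bounded sublevel sets are $d_1$--compact, then boundedness below automatically upgrades to coercive, linear growth of $\mK$ in $d_{1,G_0}$. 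This part is purely metric-geometric.

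The substantial direction is \emph{bounded below and proper $\Rightarrow$ existence}. Properness forces a minimizing sequence $\phi_j$ for $\mK$, after acting by elements of $G_0$ to nearly realize the infimum defining $d_{1,G_0}$, to remain in a $d_1$--bounded region; compactness of sublevel sets in $\mE^1$ then yields a $d_1$--limit $\phi_\infty$ that is a finite-energy minimizer of $\mK$. It remains to prove that $\phi_\infty$ is smooth and that $\omega_{\phi_\infty}$ is genuinely extremal. Here I would run an $\varepsilon$--regularized continuity path for the modified scalar-curvature equation, use properness to bound the entropy and the $d_1$--length along the path uniformly in $\varepsilon$, and feed these bounds into the Chen--Cheng a priori estimate scheme~\cite{chencheng3} (entropy control $\Rightarrow$ $C^0$ control of the potential $\Rightarrow$ higher-order estimates) to pass to a $C^\infty$ limit.

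The main obstacle is precisely this regularity step. Compared with the constant-scalar-curvature case of~\cite{chencheng3}, the extremal equation carries an extra first-order drift term coming from the gradient of the Hermitian scalar curvature along the extremal vector field, and one must verify that the Chen--Cheng estimates survive this term; establishing this is the core of He's contribution~\cite{WHe} and the reason the theorem is delicate. Granting those estimates, $\phi_\infty$ is smooth, $\omega_{\phi_\infty}$ solves the extremal equation, and the converse implication follows.
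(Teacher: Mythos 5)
The first thing to note is that the paper contains no proof of this statement: Theorem~\ref{HeTheorem} is imported verbatim as Theorem~4 of He~\cite{WHe}, and the paper only \emph{uses} it (in \S\ref{subsectCHENCHENGHE}, after translating the modified K--energy into the functional $\mF_{(P,\sigma)}$ on symplectic potentials). So the only meaningful comparison is with He's own argument, and there your outline is faithful at the level of architecture: the direction ``existence $\Rightarrow$ bounded below and proper'' does go through a Darvas--Rubinstein-type principle (convexity of the modified K--energy along finite-energy geodesics after Berman--Berndtsson, uniqueness of extremal metrics modulo $G_0$, and a compactness ingredient), and the direction ``proper $\Rightarrow$ existence'' does proceed by producing a finite-energy minimizer and regularizing along a continuity path with Chen--Cheng-type a priori estimates~\cite{chencheng3}. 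One local correction: your item (iii) is misstated --- $d_1$--bounded sublevel sets are not $d_1$--compact per se; the correct compactness statement is that subsets of the finite-energy space with bounded $d_1$--distance \emph{and} bounded entropy are $d_1$--compact, and it is the entropy control coming from a sublevel set of the K--energy that makes this applicable.

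The genuine gap is that you do not prove the one step that makes the theorem a theorem. You write that verifying the survival of the Chen--Cheng estimate scheme in the presence of the extra first-order term generated by the extremal vector field ``is the core of He's contribution,'' and you then proceed by ``granting those estimates.'' That is precisely the content being claimed: without carrying out those estimates, neither the smoothness of the finite-energy minimizer $\phi_\infty$ nor the fact that it solves the extremal equation is established, and the smoothness of the minimizer is not a soft consequence of metric compactness --- it requires the uniform estimates along the regularized path plus an argument identifying the smooth limit with the $d_1$--limit. So, read as a blind proof, your proposal is a correct and well-organized roadmap of~\cite{WHe}, but it assumes its decisive analytic step rather than proving it. Within the logic of this paper nothing is lost, since the paper itself quotes the result without proof; but the proposal should not be mistaken for an independent derivation.
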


On a toric manifold, following Donaldson~\cite{don:scalTORICstab}, it is more natural to define the K--energy on the space of symplectic potentials as follow. Let $(P,\sigma)$ be a labelled compact simple polytope with extremal affine function $\exv_{P,\lab} \in \mbox{Aff}(\kt^*,\bR)$ and $\pot\in \mS(P,\sigma)$, the modified Mabuchi K--energy (of the corresponding K\"ahler potential) is   
\begin{equation}
\mF_{(P,\sigma)}(\pot) = -\int_P \log\det (\pot_{ij})\vol + \mL_{(P,\sigma)}(\pot). 
\end{equation} Indeed, direct calculation shows that the critical points of this functional on $\mS(P,\lab)$ are the symplectic potentials satisfying $$S(H^\pot)= \exv_{(P,\sigma)}.$$ This allows us to translate He's Theorem (recalled in Theorem~\ref{HeTheorem} above) in terms of $(P,\sigma)$ only. As explained in~\cite{RKE_legendre}, when it concerns $T$--invariant objects ($T\subset K_0$ in the toric case), analytic proofs eg. estimates of Chen--Cheng\cite{chencheng3}, translate without problems using the smooth local complex charts (which do exist for any simple labeled polytope) and the compacity of $\overline{P}$. Then to prove Proposition~\ref{theoUKs_exist} it is suffisant to show that $\mF_{(P,\sigma)}$ is bounded below on $\widetilde{\mC}$ and that it is proper with respect to $d_{1,G_0}$.       

The first condition is given by Donaldson. 
\begin{lemma}\label{lemmaDONbounded}[Lemma 3.2 of Donaldson~\cite{don:scalTORICstab}] If $(P,\sigma)$ is uniformly K--stable then $\mF_{(P,\sigma)}$ is bounded below on $\widetilde{\mC}$.
\end{lemma}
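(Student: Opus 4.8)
The plan is to use the precise scaling of the logarithmic (entropy) term, so that uniform $K$--stability enters through the optimal balance between logarithmic and linear growth rather than through the crude estimate $\mL_{(P,\sigma)}(\pot)\ge\lambda\|\pot\|$ applied verbatim. Write $\|\pot\|:=\int_{\partial P}\pot\,\sigma$ for the norm on $\widetilde{\mC}$, and set $n=\dim\kt^*$, $|P|=\int_P dx$. First I would record that $\mF_{(P,\sigma)}$ is invariant under adding an affine function: $\mbox{Hess}$ annihilates affine functions, so the entropy term is unchanged, and $\mL_{(P,\sigma)}$ vanishes on $\mbox{Aff}(\kt^*,\bR)$ by the remark following Proposition~\ref{propExtvf}. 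The only affine element of $\widetilde{\mC}$ is $0$ (where $\mF_{(P,\sigma)}=+\infty$), so I may take $\pot$ non-affine and write $\pot=t\,v$ with $t:=\|\pot\|>0$ and $v:=\pot/\|\pot\|\in\widetilde{\mC}$, $\|v\|=1$. Since $(tv)_{ij}=t\,v_{ij}$, one has $\int_P\log\det(tv)_{ij}\,dx=n|P|\log t+E(v)$ with $E(v):=\int_P\log\det v_{ij}\,dx$, while $\mL_{(P,\sigma)}$ is linear, so
\[
\mF_{(P,\sigma)}(\pot)=-n|P|\log t-E(v)+t\,\mL_{(P,\sigma)}(v).
\]
Uniform $K$--stability gives $\mL_{(P,\sigma)}(v)\ge\lambda$ on the slice $\|v\|=1$, and $s\mapsto-n|P|\log s+\lambda s$ is bounded below on $(0,\infty)$ by an explicit constant $C_1=C_1(n,|P|,\lambda)$, whence
\[
\mF_{(P,\sigma)}(\pot)\ge C_1-E(v).
\]
This is the decisive point: boundedness below of $\mF_{(P,\sigma)}$ is now reduced to a \emph{uniform upper bound} for the entropy $E$ on $\{v\in\widetilde{\mC}:\|v\|=1\}$.

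Second, to bound $E(v)$ from above I would compare with the Guillemin potential $\pot_{\lab}\in\mS(P,\lab)$. By concavity of $\log\det$ on positive definite matrices, pointwise in the interior,
\[
\log\det v_{ij}\le\log\det(\pot_{\lab})_{ij}+\big\langle H^{\pot_{\lab}},\,\mbox{Hess}\,v-\mbox{Hess}\,\pot_{\lab}\big\rangle,
\]
where $H^{\pot_{\lab}}=\big((\pot_{\lab})_{ij}\big)^{-1}\in\mAK(P,\sigma)$. Integrating, the constant contribution is $\int_P\log\det(\pot_{\lab})_{ij}\,dx-n|P|$, while the cross term is evaluated by the integration by parts identity underlying~\eqref{eqFutHESS}, which uses only the boundary conditions (i)--(ii) and hence holds for every $H\in\mAK(P,\sigma)$: with $\rho:=S(H^{\pot_{\lab}})$, a fixed smooth function on $\overline P$,
\[
\int_P\big\langle H^{\pot_{\lab}},\mbox{Hess}\,v\big\rangle\,dx=\|v\|-\tfrac12\int_P\rho\,v\,dx=1-\tfrac12\int_P\rho\,v\,dx.
\]
Thus $E(v)\le C_0+\tfrac12\|\rho\|_{\infty}\int_P v\,dx$, where $C_0$ depends only on $(P,\sigma)$, and it remains to control $\int_P v\,dx$.

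Third, I would establish the convex trace inequality $\int_P v\,dx\le C_P\,\|v\|$ for every $v\in\widetilde{\mC}$. Along each ray $t\mapsto p_o+t(q-p_o)$, $q\in\partial P$, the function $v$ is convex with minimum $0$ at $t=0$, hence nondecreasing and, lying below its chord, satisfies $v(p_o+t(q-p_o))\le t\,v(q)$ for $t\in[0,1]$. Integrating in polar coordinates centred at $p_o$ (the radial Jacobian is $t^{n-1}$ times a factor bounded above and below, as $P$ is relatively compact and $p_o$ is interior) gives $\int_P v\,dx\le\frac{1}{n+1}\int_{\partial P}v\,dS\le C_P\|v\|$, since $\sigma$ and the induced surface measure $dS$ are comparable on each facet. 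Combining with the previous step yields $E(v)\le C_{(\heartsuit)}$ uniformly on the normalized slice, so $\mF_{(P,\sigma)}(\pot)\ge C_1-C_{(\heartsuit)}$ for all $\pot\in\widetilde{\mC}$, which is the assertion.

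The main obstacle is precisely the uniform entropy bound of the last two steps. Normalized convex functions are \emph{not} uniformly bounded (a convex spike concentrated near a vertex can have unit boundary mass yet arbitrarily large supremum), so one cannot estimate $\int_P\rho\,v$ by $\|v\|_\infty$; the resolution is that such spikes are nearly affine, so their entropy is very negative, and it is the weaker quantity $\int_P v\,dx$ — controlled by the convex trace inequality — that actually enters. A secondary technical point is the integrability of $E$ and the passage of the integrated tangent-plane inequality up to $\partial P$, where $\mbox{Hess}\,v$ may blow up; this is exactly why the cross term is computed through the integration by parts of~\eqref{eqFutHESS}, whose right-hand side is manifestly finite, the pointwise inequality being applied first for smooth strictly convex $v$ and then passed to the limit.
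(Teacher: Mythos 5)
Your proof is correct. Note that the paper itself gives no argument for this lemma: it is quoted as Lemma 3.2 of Donaldson~\cite{don:scalTORICstab}, so the only meaningful comparison is with Donaldson's original argument, and yours is in substance a self-contained reconstruction of it. The four ingredients all check out: (1) the decomposition $\pot=t v$, $t=\int_{\partial P}\pot\,\sigma$, exploiting that the entropy scales as $n|P|\log t$ while $\mL_{(P,\sigma)}$ scales as $t$ --- this is exactly where uniform stability enters, since $\inf_{s>0}\bigl(-n|P|\log s+\lambda s\bigr)$ is finite precisely when $\lambda>0$; (2) the tangent-plane (concavity) bound on $\log\det$ against the Guillemin potential, for which one should also record that $\int_P\log\det(\pot_\lab)_{ij}\,dx$ is finite (it is, since $\det \mbox{Hess}\,\pot_{\lab}$ blows up only like $\prod_s \ell_{\lab,s}^{-1}$ up to a bounded positive factor); (3) the integration-by-parts identity for a general $H\in\mAK(P,\sigma)$, which as you say uses only the boundary conditions (i)--(ii) and not the Abreu equation, so it may be applied with $H^{\pot_\lab}$ and $\rho=S(H^{\pot_\lab})$, smooth on $\overline{P}$ by Abreu's formula; and (4) the cone inequality $\int_P v\,dx\le C_P\int_{\partial P}v\,\sigma$ for normalized convex functions, whose radial proof is correct. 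The two delicate points you flag (non-boundedness of normalized convex functions, and validity of the integration by parts when $\mbox{Hess}\,v$ is uncontrolled at $\partial P$) are genuine, and your resolution is the standard one: work on $P_\epsilon=\{\ell_{\lab,s}\ge\epsilon\}$, use that $H(\lab_s,\cdot)=O(\ell_{\lab,s})$ while $|\nabla v|=O(1/\ell_{\lab,s})$ with pointwise decay of the product, then pass to the limit by monotone convergence on the nonnegative integrand $\langle H,\mbox{Hess}\,v\rangle$. One cosmetic caveat: the normalization $\int_P\langle H,\mbox{Hess}\,v\rangle\,dx=\|v\|-\tfrac12\int_P S(H)\,v\,dx$ that you inherit from \eqref{eqFutHESS} is off by an overall factor of $2$ from what \eqref{IPscal} forces when tested against affine functions (an inconsistency internal to the paper's conventions); this only rescales your constants $C_0$, $C_{(\heartsuit)}$ and does not affect the conclusion.
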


We will derive the second using the following result. 
\begin{lemma}\label{lemmaZZ}[Lemma 2.3 of Zhou--Zhu~\cite{zz}] If $(P,\sigma)$ is uniformly K--stable then there exist real positive constants $C,D$ such that 
 \begin{equation}
\mF_{(P,\sigma)}(\pot)\geq C\int_P \pot \vol -D  
\end{equation} for all $\pot\in \widetilde{\mC}$. 
\end{lemma}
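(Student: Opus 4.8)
The plan is to deduce this coercivity estimate from the mere lower bound of Lemma~\ref{lemmaDONbounded} by a scaling (bootstrap) argument, after first establishing a Poincar\'e-type comparison between the boundary ``norm'' $\int_{\partial P}\pot\,\sigma$ and the interior integral $\int_P\pot\,\vol$. Concretely, I would reduce the statement to the two following facts: (a) for every $\pot\in\widetilde{\mC}$ one has $\int_{\partial P}\pot\,\sigma\geq c\int_P\pot\,\vol$ for a constant $c=c(P,\sigma,p_o)>0$; and (b) for any $\epsilon\in(0,1)$ the rescaled data $(P,\epsilon\sigma)$ is again uniformly $K$--stable, so that $\mF_{(P,\epsilon\sigma)}$ is bounded below on $\widetilde{\mC}$ by Lemma~\ref{lemmaDONbounded} (the set $\widetilde{\mC}$ does not depend on $\sigma$).

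For (a), fix $\pot\in\widetilde{\mC}$, so $\pot$ is convex, $\pot\geq 0$ and $\pot(p_o)=0$. For $x\in P$ let $y(x)\in\partial P$ be the exit point of the ray issued from $p_o$ through $x$, and write $x=(1-t)p_o+t\,y(x)$ with $t\in[0,1]$. Convexity together with $\pot(p_o)=0$ gives $\pot(x)\leq t\,\pot(y(x))\leq \pot(y(x))$. Decomposing $P$ into the cones over the facets $F_s$ with apex $p_o$ and integrating in the coordinates $(t,y)\in[0,1]\times F_s$, whose Jacobian factorizes as $t^{n-1}$ times a bounded function of $y$, I would obtain
\begin{equation}
\int_P\pot\,\vol\leq C_1\sum_s\int_{F_s}\pot(y)\Big(\int_0^1 t^{n}\,dt\Big)dy=\frac{C_1}{n+1}\sum_s\int_{F_s}\pot\,dy,
\end{equation}
and since $\sigma$ restricts on each facet to a fixed positive affine measure, comparable to the induced surface measure $dy$, this yields (a). This comparison is where the convexity and the geometry of $(P,\sigma)$ are genuinely used, and it is the main obstacle: one must control the interior mass of a normalized convex function by its boundary trace \emph{uniformly} over all of $\widetilde{\mC}$.

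For (b), note that $\mM(P)\simeq\bR^d_{>0}$ is stable under positive scaling, and that $\mL_{(P,\cdot)}$ is linear in the measure: since $\exv_{(P,\epsilon\sigma)}=\epsilon\,\exv_{(P,\sigma)}$, one has $\mL_{(P,\epsilon\sigma)}=\epsilon\,\mL_{(P,\sigma)}$ and hence $\mF_{(P,\epsilon\sigma)}(\pot)=-\int_P\log\det(\pot_{ij})\,\vol+\epsilon\,\mL_{(P,\sigma)}(\pot)$. Uniform $K$--stability of $(P,\sigma)$ with constant $\lambda$ (Definition~\ref{defUKS}) then gives $\mL_{(P,\epsilon\sigma)}(f)=\epsilon\,\mL_{(P,\sigma)}(f)\geq \lambda\int_{\partial P}f\,(\epsilon\sigma)$, so $(P,\epsilon\sigma)$ is uniformly $K$--stable with the same $\lambda$ and $\mF_{(P,\epsilon\sigma)}\geq -D$ on $\widetilde{\mC}$ by Lemma~\ref{lemmaDONbounded}. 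Taking $\epsilon=\tfrac12$, combining with (a), and using uniform stability once more,
\begin{equation}
\mF_{(P,\sigma)}(\pot)=\mF_{(P,\frac12\sigma)}(\pot)+\tfrac12\,\mL_{(P,\sigma)}(\pot)\geq -D+\tfrac{\lambda}{2}\int_{\partial P}\pot\,\sigma\geq -D+\tfrac{\lambda c}{2}\int_P\pot\,\vol,
\end{equation}
which is the claim with $C=\lambda c/2$. The bootstrap absorbs the troublesome entropy term $-\int_P\log\det(\pot_{ij})\,\vol$ entirely into the bounded-below functional $\mF_{(P,\frac12\sigma)}$, while the remaining fraction $\tfrac12\mL_{(P,\sigma)}$ supplies the coercive growth; the only substantial work is the geometric inequality (a).
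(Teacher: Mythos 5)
Your proof is correct, but note that the paper itself contains no proof of this lemma: it is imported verbatim as Lemma 2.3 of Zhou--Zhu~\cite{zz}, exactly as Lemma~\ref{lemmaDONbounded} is imported from Donaldson~\cite{don:scalTORICstab}. What you have actually done is derive the Zhou--Zhu statement from the Donaldson statement, and both of your ingredients hold up. For (a): the cones over the facets with apex $p_o$ cover $P$, the Jacobian of $(t,y)\mapsto p_o+t(y-p_o)$ is $t^{n-1}$ times the (constant) distance from $p_o$ to the hyperplane supporting $F_s$, and each $\sigma_{|_{F_s}}$ is a constant positive multiple of the Euclidean facet measure, so $\int_{\partial P}\pot\,\sigma\geq c\int_P\pot\,\vol$ on $\widetilde{\mC}$ follows as you say (positivity of $\pot$ is what lets you sum over facets). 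For (b): $\mM(P)\simeq\bR^d_{>0}$ is indeed stable under positive scaling, and the linearity of $\exv_{P,\sigma}$ (hence of $\mL_{(P,\sigma)}$) in $\sigma$ is recorded in the paper in the remark following Proposition~\ref{propExtvf}, so $(P,\tfrac12\sigma)$ is uniformly $K$--stable with the same $\lambda$, the entropy term is unchanged, and Lemma~\ref{lemmaDONbounded} applies to $\mF_{(P,\frac12\sigma)}$; the splitting $\mF_{(P,\sigma)}=\mF_{(P,\frac12\sigma)}+\tfrac12\mL_{(P,\sigma)}$ then gives the claim. This is in substance the standard argument (and close to Zhou--Zhu's own: a rescaled linear part absorbed by Donaldson's lower bound, plus a boundary--interior comparison for normalized convex functions), so what your write-up buys is self-containedness: within this paper the lemma ceases to be an independent analytic input and becomes a formal consequence of Lemma~\ref{lemmaDONbounded}, scaling, and elementary convexity. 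The only caveat, shared with the paper's own statements, is that for a general $\pot\in\widetilde{\mC}$ the entropy term $-\int_P\log\det(\pot_{ij})\,\vol$ need not be finite, so both your inequality and Lemma~\ref{lemmaDONbounded} must be read with $\mF_{(P,\sigma)}$ valued in $(-\infty,+\infty]$.
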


Given two normalized symplectic potentials $\pot_0,\pot_1 \in \mS(P,\sigma)\cap \widetilde{\mC}_{\infty}$, we consider the curve $\pot_t = t\pot_1 +(1-t)\pot_0 \in \mS(P,\sigma)$ and the curve given by its Legendre transform $\phi_t :\kt\ra \bR$ (which is a curve of K\"ahler potentials in the sense that $(\omega= dd^{c}\phi_t,J)$ is bihomorphically isometric to $(\omega,J_{u_t})$ on $\mathring{M}$, see eg.\cite{abreu,don:Large,RKE_legendre}). 

Thanks to the normalization we have $\int_P \pot \vol= \int_P |\pot| \vol$ for $\pot\in \widetilde{\mC}$ and $\dot{\pot_t}(x) =- \dot{\phi_t}((\nabla \pot_t)_x)$ thus 
\begin{equation*}
 \begin{split}
  \int_P |\pot_0| \vol + \int_P |\pot_1| \vol &\geq \int_P |\pot_1- \pot_0| \vol = \int_0^1\int_P |\dot{\pot_t}| \vol \, dt\\
  &=\int_0^1\int_P |\dot{\phi_t}((\nabla \pot_t)_x)| \vol\, dt = \int_0^1\int_\kt |\dot{\phi_t}(y)| \det(D\nabla \phi_t)_y dy\, dt  
 \end{split}
\end{equation*} where the last equality uses the change of variables into complex coordinates, see Remark~\ref{cplxPoV}. This is used to get the expression
$$\int_0^1\int_\kt |\dot{\phi_t}(y)| \det(D\nabla \phi_t)_y dy\, dt = \frac{1}{(2\pi)^n} \int_0^1 \int_M |\dot{\phi_t}|\omega_{\phi_t}^n \, dt.$$
Now, the right hand side of the last expression is the Darvas length~\cite{darvas} of the curve $\phi_t$ connecting two K\"ahler potentials $\psi_0:=\phi_0-\phi$ and $\psi_1:=\phi_1-\phi$ in $\mH_{K_0}$, therefore  $$\frac{1}{(2\pi)^n} \int_0^1 \int_M |\dot{\phi_t}|\omega_{\phi_t}^n \, dt \geq d_{1}(\psi_0,\psi_1) \geq d_{1,G_0}(\psi_0,\psi_1).$$ Summing up, for any $\pot_1\in \mS(P,\sigma)\cap \widetilde{\mC}_{\infty}$, we have that 
$$\int_P |\pot_0| \vol + \int_P \pot_1 \vol \geq d_{1,G_0}(\psi_{\pot_0},\psi_{\pot_1})$$ with $\psi_{u}$ being the K\"ahler potential corresponding to the metric associated to $u$. In particular, fixing $\pot_0$ and substituing to $\pot_1$ a sequence $\pot_{1,k}$ such that $d_{1,G_0}(\phi_{\pot_0},\phi_{\pot_{1,k}}) \ra +\infty$ we get that $\int_P \pot_{1,k} \vol \ra +\infty$ which, using Zhou--Zhu properness Lemma \ref{lemmaZZ}, implies that $$\mF_{(P,\sigma)}(\pot_{1,k}) \ra +\infty.$$ This, with Lemma~\ref{lemmaDONbounded} above, is enough to fulfill He's condition and get that there exists a torus invariant extremal K\"ahler metric. That is, it concludes the proof of Proposition~\ref{theoUKs_exist} which, together with Theorem~\ref{theoCLS} of Chen--Li--Sheng~\cite{CLS} gives Theorem~\ref{theoCHENCHENGHEintro}.

\subsection{Extremal almost K\"ahler metrics}

  In this note we are interested in the $H\in\mAK(P,\sigma)$ satisfying the Abreu equation~\eqref{ABREUequationH}. We will consider the following set of {\it formal solutions} 
$$
\mW(\sigma) :=\{ H: \overline{P} \ra \mbox{Sym}^2(\kt^*)\,|\, H \mbox{ satisfies conditions (i), (ii) and } S(H)= \exv_{P,\sigma}\}$$
  $$\mW :=\bigsqcup_{\sigma\in\mM(P)} \mW(\sigma).$$ The only thing a $\mbox{Sym}^2(\kt^*)$--valued function $H\in \mW$ misses to define an extremal toric almost K\"ahler metric in the sense of Lejmi is the positivity (that is condition (iii)). Therefore $$\mW^+(\sigma) := \mAK(P,\sigma)\cap \mW(\sigma)$$ parametrizes the space of extremal toric almost K\"ahler metrics of involutive type on $P \times \kt$ with boundary conditions imposed by the condition (ii) with respect to $\sigma$ (see~\eqref{2ndBCsigma}). Translated in our notation, Lejmi proved in \cite{lejmi}, see also~\cite{don:scalTORICstab}, that the set $\mW^+(\sigma)$ is either empty or infinite dimensional.

\begin{proposition}\label{propExistsFORMAL} Let $P$ be a simple polytope. For any labelling $\sigma\in \mM(P)$ the set $\mW(\sigma)$ is not empty. Moreover, the set $$\{\sigma\in \mM(P)\,|\, \mW^+(\sigma)\neq \emptyset\}$$ is a non-empty open convex cone in $\mM(P)$. \end{proposition}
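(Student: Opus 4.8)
The plan is to prove the three assertions in turn: that $\mW(\sigma)\neq\emptyset$ for every $\sigma$, and that the set in question is a nonempty open convex cone. The first is the analytic core; the others follow from linearity and from the openness of the positivity condition. For nonemptiness of $\mW(\sigma)$ I would exploit that the equation $S(H)=\exv_{P,\sigma}$ is \emph{linear} in $H$ and that conditions (i) and (ii) are \emph{affine} in $H$. Fix the inverse Hessian $H^{\pot_\lab}\in\mAK(P,\sigma)$ of the Guillemin potential (Proposition~\ref{LOCALmetric}, Remark~\ref{remGP}); it satisfies (i), (ii), (iii). Writing $H=H^{\pot_\lab}+K$, solving $S(H)=\exv_{P,\sigma}$ amounts to finding $K$ satisfying (i) and the \emph{homogeneous} boundary conditions $K(\lab_s,\cdot)=0$ and $dK(\lab_s,\lab_s)=0$ on each $F_s$, with $S(K)=\rho$, where $\rho:=\exv_{P,\sigma}-S(H^{\pot_\lab})$. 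By the integration by parts~\eqref{IPscal} (which uses only (ii)) and the characterisation of $\exv_{P,\sigma}$ in Proposition~\ref{propExtvf}, the function $\rho$ is $L^{2}(dx)$-orthogonal to $\mbox{Aff}(\kt^{*},\bR)$.

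The main obstacle is solving this single scalar equation for the symmetric-matrix-valued $K$. Let $V$ be the space of tensors satisfying (i) and the homogeneous boundary conditions above. For $K\in V$ and $f$ smooth, two integrations by parts produce no boundary term, so $\int_{P}S(K)\,f\,dx=-\int_{P}\langle K,\mbox{Hess}\,f\rangle\,dx$; thus $S\colon V\to\mbox{Aff}(\kt^{*},\bR)^{\perp}$ is, up to sign, formally adjoint to $f\mapsto\mbox{Hess}\,f$, whose kernel is exactly the affine functions. Since $wA\in V$ for every constant symmetric $A$ and every scalar $w$ vanishing to second order on $\partial P$, any $f$ annihilating the range of $S|_{V}$ satisfies $\int_{P}w\,f_{,ij}\,dx=0$ for all such $w$ and all $i,j$; taking $w=\big(\prod_{s}\ell_{\lab,s}^{2}\big)\psi$ forces $\mbox{Hess}\,f=0$, i.e. $f$ affine. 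Hence, granting closed range (an elliptic estimate for the overdetermined pair $(\mbox{div}\,\mbox{div},\mbox{Hess})$, which I expect to be the technical heart), $\rho\in\mbox{Aff}(\kt^{*},\bR)^{\perp}$ lies in the image, giving $\mW(\sigma)\neq\emptyset$; the solution set is modelled on the infinite-dimensional $\ker(S|_{V})$, matching Lejmi's dichotomy. A more hands-on alternative is to invert the double divergence facet by facet with a partition of unity.

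For the cone property I would first record the $\sigma$-dependence of the data. Writing $\lab_s=a_s n_s$ for a fixed reference normal $n_s$ and $\sigma_s=b_s\tau_s$ with $n_s\wedge\tau_s=-dx$, relation~\eqref{eqlabels=mes} gives $a_sb_s=1$; hence the first boundary condition $H(\lab_s,\cdot)=0$ is \emph{independent} of $\sigma$, while the second becomes $dH(n_s,n_s)=2b_s\,n_s$, which is \emph{linear} in $\sigma$, as is $\exv_{P,\sigma}$ (Remark following Proposition~\ref{propExtvf}). Consequently, if $H_0\in\mW^{+}(\sigma_0)$ and $H_1\in\mW^{+}(\sigma_1)$ then $H_t=(1-t)H_0+tH_1$ satisfies (i), both boundary conditions for $\sigma_t=(1-t)\sigma_0+t\sigma_1$, the extremal equation $S(H_t)=\exv_{P,\sigma_t}$, and stays positive definite by convexity of (iii); similarly $cH_0\in\mW^{+}(c\sigma_0)$ for $c>0$. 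Thus $\{\sigma\,|\,\mW^{+}(\sigma)\neq\emptyset\}$ is a convex cone.

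Openness follows by combining the first part with the openness of (iii): given $H\in\mW^{+}(\sigma_0)$, solving the $\sigma$-dependent homogeneous correction problem with a fixed bounded linear right inverse yields a family $H_\sigma=H+K(\sigma)\in\mW(\sigma)$ with $K(\sigma_0)=0$ and $K(\sigma)\to0$ as $\sigma\to\sigma_0$, so $H_\sigma\in\mW^{+}(\sigma)$ nearby. Nonemptiness is the only point requiring an existence input: for $\sigma\in\bUKS(P)$, Theorem~\ref{theoCHENCHENGHEintro} yields an extremal symplectic potential $\pot\in\mS(P,\sigma)$, whose inverse Hessian $H^{\pot}\in\mAK(P,\sigma)$ then satisfies $S(H^{\pot})=\exv_{P,\sigma}$, so $H^{\pot}\in\mW^{+}(\sigma)$; therefore $\{\sigma\,|\,\mW^{+}(\sigma)\neq\emptyset\}\supseteq\bUKS(P)$, which is nonempty. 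In summary, the analytic solvability of the underdetermined equation $S(K)=\rho$ is the essential difficulty, while nonemptiness rests on the (standard) fact that $\bUKS(P)$ is nonempty.
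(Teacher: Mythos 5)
Your linear-structure instincts are right, and your convex-cone step (the equation and conditions (i)--(ii) are affine in $H$, the data depend linearly on $\sigma$, positivity survives convex combinations and dilations) is essentially the paper's own observation. But both existence claims your proof rests on are genuine gaps. For the first assertion, that $\mW(\sigma)\neq\emptyset$ for \emph{every} $\sigma$, everything hinges on a closed-range/bounded-right-inverse theorem for the double-divergence operator $K\mapsto-\sum_{i,j}K_{ij,ij}$ on tensors with homogeneous boundary conditions, which you explicitly leave unproved (``the technical heart''). Even granting closed range in $L^2$ (say via a second-order Poincar\'e inequality and the closed range theorem), what duality produces is a distributional solution with weakly interpreted boundary conditions; membership in $\mW(\sigma)$ demands smoothness up to the boundary of a polytope \emph{with corners} together with the pointwise conditions \eqref{1stBC}--\eqref{2sdBC}, and since the single scalar equation $S(K)=\rho$ is underdetermined there is no elliptic regularity available to upgrade the weak solution. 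The paper avoids this analysis altogether: by linearity it suffices to produce solutions for $\sigma$ in some \emph{open} set $U\subset\mM(P)\simeq\bR^d_{>0}$, because such a $U$ contains a basis $\{\sigma_s\}$ of the $d$--dimensional space of boundary data and $\sum_s a_sH_s\in\mW\bigl(\sum_s a_s\sigma_s\bigr)$ for arbitrary real $a_s$ ($\mW$ involves no positivity). Such a $U$ exists because, by \cite{RKE_legendre}, every simple polytope carries a $\sigma_{KE}$ with a K\"ahler--Einstein potential $\pot_{KE}$, so $H^{\pot_{KE}}\in\mW^+(\sigma_{KE})$, and Donaldson's openness (Proposition~\ref{DonPropOPEN}) then gives solutions for all nearby $\sigma$; these are inverse Hessians of honest symplectic potentials, so smoothness up to the boundary is free.

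For the second assertion, your nonemptiness of $\{\sigma\,|\,\mW^+(\sigma)\neq\emptyset\}$ rests on the claim that $\bUKS(P)\neq\emptyset$, which you call standard. It is not: for a general simple (typically irrational) labelled polytope this is precisely the nontrivial existence input, and within this paper's toolkit it is itself deduced from the K\"ahler--Einstein theorem of \cite{RKE_legendre} combined with Chen--Li--Sheng (Theorem~\ref{theoCLS}). So your argument hides the essential input inside an unjustified assertion, while also invoking the much heavier Theorem~\ref{theoCHENCHENGHEintro} (Chen--Cheng/He) where the K\"ahler--Einstein point plus Donaldson's openness suffice. Finally, your openness step both reuses the unproven bounded right inverse and glosses over the one point that genuinely needs care: condition (iii) requires positivity on every open face, and near lower-dimensional faces the relevant tangential restrictions of $H$ themselves degenerate (condition \eqref{1stBC} forces $H(\lab_s,\cdot)=0$ on $F_s$), so smallness of $K(\sigma)$ in any fixed norm on $\overline{P}$ does not by itself give positivity of $H+K(\sigma)$; this is the ``special care'' the paper alludes to, and it needs an actual argument (for instance induction on faces using the local charts of \S\ref{subsubSECTlocalCHARTS}) rather than a passing remark.
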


\begin{proof} First, note that the Abreu equation is linear on $\mW$ and that the boundary condition data $\sigma \in \mM(P)$ depends lineary on the $\mbox{Sym}^2(\kt^*)$--valued function thanks to \eqref{2ndBCsigma}. Therefore, it is sufficiant to find an open set $U \subset \mM(P)$ of $\sigma$'s such that $\mW(\sigma)$ is not empty to prove the first assertion. Indeed, in this case $U$ would contain a basis $\{\sigma_s\}_{s=1,\dots, d}\subset U$ and any $\tilde{\sigma}\in \mM(P)$ is such $\sigma= \sum_{s=1}^d a_s\sigma_s$ with $a_s\in \bR$. Picking any solution $H_s\in\mW(\sigma_s)$ we have $\sum_{s=1}^d a_sH_s \in \mW(\tilde{\sigma})$.      
 According to~\cite{RKE_legendre} for each polytope there exists $\sigma_{KE} \in \mM(P)$, unique up to dilatation, and a symplectic potential $\pot_{KE} \in \mS(P,\lab_{\sigma_{KE}})$ such that the metric $g_{\pot_{KE}}$ is K\"ahler--Einstein on $P\times \kt$ with respect to the natural symplectic structure on $\kt^*\times \kt$.  In particular, $H^{\pot_{KE}}$ is a solution of Abreu's equation and thus $H^{\pot_{KE}}\in \mW^+(\sigma_{KE})$. Thanks to Donaldson openness result, see Proposition \ref{DonPropOPEN} above, there exists an open set $U\subset \mM(P)$ of $\sigma$'s such that $\mW^+(\sigma)$ is not empty. The second assertion follows the same argument with a special care for positive definite condition.
\end{proof}

Proposition~\ref{propExistsFORMALintro} is a direct consequence of the last proposition.

%
%

\subsection{The space of formal solutions}\label{sectFORMALsol}

 \begin{proposition}\label{propNfunctional}[Donaldson~\cite{don:scalTORICstab}] Let $(P,\sigma)$ be a labelled polytope. Assume the set $\mW^+(\sigma)$ is non empty. Then the functional $N :\mW^+(\sigma) \ra \bR$ defined by $$N(H) = \int_P\log(\det H)\, dx$$ is concave and the critical point, if it exists, is the inverse of a Hessian of a potential $\pot \in \mS(P,\lab_\sigma)$.  
  \end{proposition}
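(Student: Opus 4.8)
The plan is to prove two things about the functional $N(H) = \int_P \log(\det H)\, dx$ on the convex set $\mW^+(\sigma)$: first that it is concave, and second that at any critical point the Hessian-like structure forces $H$ to be an inverse Hessian of a symplectic potential. For concavity, I would exploit the fact that $\mW^+(\sigma)$ is convex (as noted in the excerpt, for $H_0, H_1 \in \mAK(P,\sigma)$ the segment $H_t = tH_1 + (1-t)H_0$ stays in $\mAK(P,\sigma)$, and since the Abreu equation $S(H)=\exv_{P,\sigma}$ is linear in $H$, the segment also stays in $\mW(\sigma)$, hence in $\mW^+(\sigma)$). On such a segment the map $t \mapsto \log\det H_t$ is concave for each fixed $x$, because $A \mapsto \log\det A$ is a concave function on the cone of positive-definite symmetric matrices. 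Integrating over $P$ preserves concavity, so $N$ is concave along all these segments, which is what concavity of $N$ means here.

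For the characterization of critical points, the plan is to compute the first variation of $N$. Given $H \in \mW^+(\sigma)$ and an admissible variation $\dot H = K$, the tangent space to $\mW^+(\sigma)$ consists of $\mathrm{Sym}^2(\kt^*)$-valued functions $K$ satisfying the \emph{homogeneous} versions of the constraints: the linearized boundary condition (from (ii), giving $K_y(\lab_s,\cdot)=0$ on each facet and the trace condition on $dK$ being zero, i.e. $K$ induces the \emph{zero} measure via \eqref{2ndBCsigma}) together with $S(K) = -\sum_{i,j} K_{ij,ij} = 0$. The first variation is
\begin{equation*}
DN_H(K) = \int_P \langle H^{-1}, K\rangle\, dx = \int_P \sum_{i,j} G_{ij} K_{ij}\, dx,
\end{equation*}
where $G = H^{-1}$ and $\langle\cdot,\cdot\rangle$ is the natural pairing. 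A critical point is an $H$ for which this vanishes for all admissible $K$.

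The key step is then to integrate by parts twice and use the boundary conditions. Formula \eqref{eqFutHESS} of the excerpt, $\mL_{(P,\sigma)}(f) = \int_P \langle H, \mathrm{Hess}\, f\rangle\, dx$, is precisely the analogue that makes $S$ and the second-derivative pairing adjoint up to boundary terms; applying the same integration-by-parts identity with the roles adapted, the critical point condition $\int_P \sum_{i,j} G_{ij} K_{ij}\, dx = 0$ for all $K$ with $S(K)=0$ and vanishing induced boundary data becomes a statement that $G = H^{-1}$ is $L^2$-orthogonal to the range of the relevant second-order operator restricted to admissible variations. Because the variations $K$ range over all symmetric tensors with $S(K)=0$ and homogeneous boundary data, this orthogonality forces $\sum_{j} G_{ij,j}$ to be a gradient, i.e. $G_{ij} = \pot_{,ij}$ for some scalar function $\pot$; the boundary conditions on $H$ then translate into $\pot$ satisfying the conditions (1) and (2) of Proposition~\ref{LOCALmetric}, so $\pot \in \mS(P,\lab_\sigma)$ and $H = H^\pot$.

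The main obstacle I expect is the last step: showing that the Euler--Lagrange condition genuinely forces $G=H^{-1}$ to be a Hessian rather than merely some closed/symmetric-looking tensor. Concretely, one must show that vanishing of $\int_P \langle G, K\rangle\, dx$ over the constrained space of admissible $K$ implies the integrability condition $G_{ij,k} = G_{kj,i}$ (making $(G_{ij})$ locally a Hessian), and then that the resulting potential extends across $\overline{P}$ with the correct Guillemin-type boundary behavior so that $\pot$ lands in $\mS(P,\lab_\sigma)$ rather than in a larger class. The careful bookkeeping of the boundary terms arising in the double integration by parts, ensuring that the homogeneous boundary data on $K$ kills exactly the unwanted terms so that the only surviving constraint is the desired symmetry of $G$, is where the real work lies; the concavity half of the statement is comparatively immediate.
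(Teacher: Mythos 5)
The paper offers no proof of this proposition to compare against: it is stated as a quotation of Donaldson~\cite{don:scalTORICstab}, so your proposal can only be measured against Donaldson's own variational argument, whose outline you reproduce. The concavity half of what you write is correct and essentially complete: $\mW^+(\sigma)=\mAK(P,\sigma)\cap\mW(\sigma)$ is convex because conditions (i), (ii) and the equation $S(H)=\exv_{P,\sigma}$ are affine in $H$ while condition (iii) cuts out a convex cone, and $A\mapsto\log\det A$ is concave on positive definite symmetric matrices, so $N$ is concave along segments. The first variation formula $DN_H(K)=\int_P\langle H^{-1},K\rangle\,dx$ and the identification of admissible variations (homogeneous version of (ii) via \eqref{2ndBCsigma}, together with $S(K)=0$) are also right.

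The critical-point half, however, contains a genuine gap, and it is precisely the step you defer: from $\int_P\langle G,K\rangle\,dx=0$ for all admissible $K$ (with $G=H^{-1}$) you must actually \emph{produce} the potential, and nothing in your text does so. Orthogonality to the kernel of the constraint operator only places $G$ in the $L^2$--\emph{closure} of the range of the adjoint of $K\mapsto S(K)$, namely of $f\mapsto \mathrm{Hess}\,f$; to conclude one needs either a closed-range argument or, more directly, to test against compactly supported $K$ of ``curl--curl'' type (in two variables $K_{11}=\psi_{,22}$, $K_{22}=\psi_{,11}$, $K_{12}=-\psi_{,12}$, which satisfy $S(K)=0$ automatically): this yields the Saint-Venant compatibility equations for $G$ in the distributional sense, which on the convex, hence simply connected, domain $P$ integrate to $G=\mathrm{Hess}\,\pot$; one must then still check separately that conditions (i)--(iii) on $H$ force $\pot-\pot_{\lab_\sigma}$ to be smooth up to $\partial P$, i.e.\ $\pot\in\mS(P,\lab_\sigma)$, rather than some weaker boundary regularity. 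Note also that your intermediate reduction is incorrect as stated: ``$\sum_j G_{ij,j}$ is a gradient'' is strictly weaker than ``$G$ is a Hessian'' --- the curl--curl tensors above are divergence-free (so their row-divergence is the gradient of a constant) yet are Hessians only when $\psi$ is biharmonic --- and the correct pointwise integrability condition is the one you state only afterwards, $G_{ij,k}=G_{kj,i}$. In sum, you have proved concavity and set up the Euler--Lagrange equation, but the assertion that critical points are inverse Hessians, which is the actual content of the proposition, is flagged rather than proved.
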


The union of the $\mW^+(\sigma)$ is a convex cone $$\mW^+:=\bigsqcup_{\sigma\in\mM(P)} \mW^+(\sigma).$$ 

From the observation~\eqref{2ndBCsigma}, the map $\bsigma: \mW^+\ra \mM(P)$ taking $H\in \mW^+$ to the measure $\bsigma(H)=\sigma\in\mM(P)$ is well-defined. The "fibers" of $\bsigma$ are the $\mW^+(\sigma)$. Proposition~\ref{propExtAlm_UKSintro} implies that the image of the map $\bsigma$ lies into $\bUKS(P)$.

Note that $\mW^+$ contains the inverse Hessians of the extremal K\"ahler potentials, that is the union over $\mM(P)$ of $\mK\mW^+(\sigma):=\{H^\pot \,|\, \pot\in\mS(P,u_\sigma),\, H^\pot \in \mW^+(\sigma)\}$. When non-empty, $\mK\mW^+(\sigma)$ contains a unique point, the maximum  of $H$ on $\mW^+(\sigma)$ thanks to Proposition~\ref{propNfunctional}. Since $N$ is continuous on $\mW^+$, $\mK\mW^+:=\bigsqcup_{\sigma\in\mM(P)} \mK\mW^+(\sigma)$ is connected. The relative toric version of the Yau--Tian--Donaldson conjecture is then equivalent to 
\begin{itemize}
 \item[(i)] $\mK\mW^+$ meets each fiber $\mW^+(\sigma)$,
 \item[(ii)] $\bsigma$ is onto.
\end{itemize}

The assertion (i) is that if there exists an extremal toric almost K\"ahler metric compatible with $\omega$ then there exists an extremal toric K\"ahler metric and assertion (ii) is that if $(P,\sigma)$ is uniformly $K$--stable then there exists an extremal toric almost K\"ahler metric compatible with $\omega$. This is Corollary~\ref{coroINTROconjIMPLIES}. 

\section{Miscellaneous}

\subsection{The normal and the angle}\label{sectANGLEcone}
Let $\labb=(\labb_1,\dots, \labb_d)$ and $\lab=(\lab_1,\dots, \lab_d)$ be two distinct sets of labels on the same polytope $P\subset \kt^*$ and assume that $(P,\labb,\Lambda)$ is rational Delzant and thus associated to a compact toric symplectic manifold $(M,\omega,T=\kt/\Lambda)$ through the Delzant--Lerman--Tolman correspondance. For any $\pot\in \mS(P, \lab)$ the metric $g_\pot$, see~\eqref{ActionAnglemetric}, defines a smooth K\"ahler metric on $P\times \kt \simeq \mathring{M} =x^{-1}(P)$ compatible with $\omega$. However, since $\pot\notin \mS(P, \labb)$ the metric $g_\pot$ is not the restriction of a smooth metric on $M$. The behavior of $g_\pot$ along the boundary of $\mathring{M}$ has been analysed in~\cite{RKE_legendre} and we recall the conclusion below.

Recall that $\labb_s$ and $\lab_s$ are inward to $P$ and normal to the facet $F_s$. We denote $a_s >0$ the real number such that $$a_s\lab_s=\labb_s.$$ Note that the boundary condition of $\mS(P, \lab)$ depends on the labelling via the Guillemin potential $\pot_{\lab}$, see Remark~\ref{remGP}. Also, all the potentials in $\mS(P, \lab)$ have the same behavior along $\del P$ and for every $\pot\in \mS(P, \lab)$, $g_u$ differs from $g_{\pot_{\lab}}$ only by the addition of a smooth tensor on $\overline{P}\times T\subset \kt^*\times T$.  Therefore, without loss of generality, we pick $\pot_{\lab} \in \mS(P, \lab)$ to understand that behavior. 

 The metric $g_{\pot_\lab}$ which is smooth on $\mathring{M}=P\times T=x^{-1}(P)$, has a  
\begin{equation*}\begin{split}\label{SINGbehavior}
 &\bullet \,\mbox{singularity of cone angle type and angle }2a_s\pi  \,\mbox{ along }\,x^{-1}(\mathring{F}_s), \mbox{ if } a_s<1;\\
 &\bullet \,\mbox{smooth extension on } x^{-1}(P\cup\mathring{F}_s), \mbox{ if } a_s=1;\\
 &\bullet \,\mbox{singularity caracterized by a large angle }2a_s\pi>2\pi \mbox{ along }\,x^{-1}(\mathring{F}_s),  \mbox{ if } a_s>1.
\end{split}\end{equation*} where, here, we have adopted the terminology in~\cite{Do:cone}. 

\begin{proposition} \label{propCONEangleMETRIC} \cite{RKE_legendre}
 Let $(M,\omega,T)$ be a toric compact symplectic manifold with labelled moment polytope $(P,\labb,\Lambda)$ and momentum map $x: M\ra \kt^*$. For any labelling $\lab$ of $P$, any potential $\pot\in\mS(P,\lab)$ provides a K\"ahler metric $g_\pot$, defined via~\eqref{ActionAnglemetric}, smooth and compatible with $\omega$ on $\mathring{M} =x^{-1}(P)$ and with cone angle singularity $2\pi (\lab_s/\labb_s)$ transverse to the divisor $x^{-1}(\mathring{F}_s)$. Conversely, any compatible $T$--invariant K\"ahler metric smooth outside a divisor $D$ and with cone angle singularity transverse to $D$ is of this form.      
\end{proposition}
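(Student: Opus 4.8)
The plan is to reduce everything to an explicit local model around the interior of a facet and then read off the cone angle by comparison with the smooth Delzant structure. Throughout I keep the scalars $a_s>0$ with $a_s\lab_s=\labb_s$ fixed just before the statement, and I use Proposition~\ref{LOCALmetric} freely: its first part already guarantees that $g_\pot$ is a smooth K\"ahler metric on $\mathring{M}=x^{-1}(P)$ compatible with $\omega=dx\wedge d\theta$ for \emph{every} strictly convex $\pot$, so smoothness and compatibility on $\mathring{M}$ are immediate and only the transverse behaviour along each $x^{-1}(\mathring{F}_s)$ remains. The first reduction is to the Guillemin potential: condition~(2) of Proposition~\ref{LOCALmetric} depends on the labelling only through $\pot_\lab$, so $\pot-\pot_\lab$ extends smoothly across $\overline{P}$ and, as recalled just before the statement, $g_\pot$ and $g_{\pot_\lab}$ differ by a tensor smooth up to the boundary of $\mathring{M}$. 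Hence they have identical transverse behaviour along every divisor and it suffices to treat $\pot=\pot_\lab$ given by~\eqref{GuilleminPotential}.

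Next I localise near a point of $x^{-1}(\mathring{F}_s)$. In $\pot_\lab=\frac12\sum_t \ell_{\lab,t}\log\ell_{\lab,t}$ every term with $t\neq s$ is smooth there since $\ell_{\lab,t}>0$, so only $\frac12\ell_{\lab,s}\log\ell_{\lab,s}$ is singular as $\ell_{\lab,s}\to 0$. Splitting affine coordinates as $(t,y)$ with $t=\ell_{\lab,s}$ transverse to $F_s$ and $y$ along $F_s$, a direct Hessian computation with the single singular logarithm shows that, modulo terms extending smoothly across $t=0$, the transverse part of~\eqref{ActionAnglemetric} equals
\begin{equation*}
\frac{1}{2t}\,dt^2+2t\,d\vartheta^2,
\end{equation*}
where $\vartheta$ is the angle of the circle generated by $\lab_s$. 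Substituting $\rho=\sqrt{2t}$ turns this into a flat $2$--cone $d\rho^2+\tfrac14\rho^2\,d\vartheta^2$ transverse to the divisor, so $g_{\pot_\lab}$ is genuinely a cone-angle metric transverse to $x^{-1}(\mathring{F}_s)$; the cone angle is the period of $\vartheta$ times the conformal factor.

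The cone angle is then fixed by the torus data, which is the step carrying the real content. I would work in the smooth vertex chart $M_p\cong\bC^n$ of~\S\ref{subsubSECTlocalCHARTS} attached to a vertex $p\in\overline{F}_s$, where $\labb_s$ is a primitive generator of $\Lambda$ and $g_{\pot_\labb}$ is honestly smooth across the coordinate divisor $\{w_s=0\}=x^{-1}(\mathring{F}_s)\cap M_p$, i.e.\ of cone angle $2\pi$. Since the action--angle coordinates $(x,\theta)$ and the period lattice $\Lambda$ are fixed by the symplectic Delzant data $(P,\labb,\Lambda)$ and do \emph{not} change when we pass to $\lab$, rewriting the transverse model in the fixed smooth coordinate $w_s$ and using $\labb_s=a_s\lab_s$ gives $g_{\pot_\lab}\sim |w_s|^{2(a_s-1)}|dw_s|^2$ to leading order, exhibiting the transverse cone angle $2\pi a_s$ and matching the trichotomy recalled before the statement. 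For the converse I would run this backwards: a $T$--invariant compatible K\"ahler metric smooth away from a $T$--invariant divisor $D$ with transverse cone singularities restricts on $\mathring{M}$ to some $g_\pot$ with $\pot$ strictly convex by Proposition~\ref{LOCALmetric}; the prescribed transverse angles force, via the same local model, the boundary expansion of $\pot$ at each $F_s$ to be the Guillemin one for the unique rescaled normal $\lab_s=\labb_s/a_s$, so that $\pot\in\mS(P,\lab)$.

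The main obstacle is the bookkeeping in the cone-angle step: one must show that, after isolating the single singular logarithm, the \emph{entire} remainder of the Hessian and of its inverse extends smoothly across the divisor, and simultaneously track how the angular period attached to $\Lambda$ and the primitive $\labb_s$ interacts with the chosen normal $\lab_s$. This is precisely where the explicit vertex charts $M_p$ of~\S\ref{subsubSECTlocalCHARTS} and the comparison with the smooth model $g_{\pot_\labb}$ are indispensable, and it is carried out in detail in~\cite{RKE_legendre}.
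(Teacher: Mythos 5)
Your proposal is correct and takes essentially the same route as the paper, which itself only sketches the argument and defers the detailed bookkeeping to \cite{RKE_legendre}: reduce to the Guillemin potential (any $\pot\in\mS(P,\lab)$ differs from $\pot_\lab$ by a tensor smooth up to the boundary of $\mathring{M}$), and read off the transverse cone structure by comparing the Hessian and inverse Hessian of $\pot_\lab$ with those of the smooth reference $\pot_\labb$ in the Delzant vertex charts. Two minor points. First, with $\rho=\sqrt{2t}$ your transverse model becomes $d\rho^2+\rho^2\,d\vartheta^2$, not $d\rho^2+\tfrac14\rho^2\,d\vartheta^2$; this slip is harmless since you determine the angle only in the vertex-chart step, where the period of $\vartheta$ is fixed by $\Lambda$. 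Second, your conclusion that the angle is $2\pi a_s$ with $a_s\lab_s=\labb_s$ is indeed the correct one: it matches the trichotomy displayed just before the proposition, whereas the ratio $2\pi(\lab_s/\labb_s)$ printed in the statement itself is its reciprocal (an inversion in the paper, as the orbifold case $\lab_s=m\labb_s$, which has cone angle $2\pi/m$, confirms).
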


It is straighforward to extend the argument proving the last proposition to almost K\"ahler metric. Indeed we just compared the behaviour of the Hessian and inverse Hessian of $\pot_\lab$ and $\pot_{\labb}$. Therefore, any $H\in\mA\mK(P,\sigma_{\lab})$ defines an almost K\"ahler metrics on $\mathring{M}$ and with cone angle singularity $2\pi (\lab_s/{\labb_s})$ transverse to the divisor $x^{-1}(\mathring{F}_s)$.  

\subsection{The constant scalar curvature case}

 In case $(P,\lab,\Lambda)$ is rational and associated to a compact toric symplectic orbifold $(M,\omega, T)$ via the Delzant--Lerman--Tolman correspondance and assuming we fix a compatible toric K\"ahler structure $(g_\pot,J_\pot)$ (so that $\pot\in\mS(P,\lab)$) then the classical Futaki invariant evaluated on the real holomorphic vector field $J_\pot X_f$ induced by the affine linear function $f\in \mbox{Aff}(\kt^*,\bR)$ is defined in~\cite{futaki} to be
    \begin{equation}\label{FUTdefnM}\mbox{Fut}(M,[\omega])(f):= \int_M (S(H^\pot) - \overline{S}_{[\omega]}) (x^*f) \omega^n/n! \end{equation} 
    where $\overline{S}_{[\omega]} = \int_M S(H^\pot)\omega^n / \int_M\omega^n$ is the normalized total scalar curvature.  Now using~\eqref{IPscal} and the Fubini's Theorem of product integration, to express $\mbox{Fut}(M,[\omega])$ in terms of $(P,\lab)$ and $dx$ we see that $\overline{S}_{[\omega]} =2 \int_{\partial P}\sigma_u /\int_Pdx$ and 
    $$\mbox{Fut}(M,[\omega])(f)= \frac{2}{\int_Pdx}\left( \int_{\partial P}f d\sigma_\lab \int_Pdx- \int_P f dx \int_{\partial P}d\sigma_\lab  \right).$$

    This observation is a motivation to introduce the functional \begin{equation}\label{FUTdefnP} \mbox{Fut}(P,\lab)(f) := \int_{\partial P}f d\sigma_\lab \int_Pdx- \int_P f dx \int_{\partial P}d\sigma_\lab, \end{equation} which in the rational case, up to a multiplicative positive constant, is the classical Futaki invariant restricted to the complex Lie algebra $\kt\oplus J\kt$. Moreover, in the case the classical Futaki invariant vanishes, equivalently when $A_{\sigma}$ is a constant (which is then $A_{\sigma} = 2 \int_{\partial P}\sigma_u /\int_Pdx$) then $$\mbox{Fut}(P,\lab)(f)= \frac{2}{\int_Pdx} \mL_{(P,\sigma)}(f)$$ for any $f\in\mbox{Aff}(\kt^*,\bR)$.

 \begin{corollary} Given any labelled polytope $(P,\lab)$, if there exists a symplectic potential $\pot\in \mS(P,\lab)$ such that $g_\pot$ has constant scalar curvature then $\emph{Fut}(P,\lab)$ vanishes identically on $\emph{Aff}(\kt^*,\bR)$.  
   \end{corollary}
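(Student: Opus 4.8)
The plan is to reduce the statement to two facts already in hand from the previous subsections: that a constant scalar curvature potential forces the extremal affine function $\exv_{P,\sigma}$ to be the constant $A_{\sigma}$, and that the functional $\mL_{(P,\sigma)}$ vanishes identically on affine-linear functions (the Remark following Proposition~\ref{propExtvf}). Once these are combined with the proportionality $\mbox{Fut}(P,\lab) = \tfrac{2}{\int_P dx}\,\mL_{(P,\sigma)}$ recorded just above for the constant-$A_\sigma$ case, the corollary follows at once.

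First I would observe that if $\pot\in\mS(P,\lab)$ is such that $g_\pot$ has constant scalar curvature, then by the Abreu formula~\eqref{abreuFormPOT} the function $S(H^\pot)$ is a constant; in particular $S(H^\pot)\in\mbox{Aff}(\kt^*,\bR)$, so $g_\pot$ is extremal in the sense of Calabi. Proposition~\ref{propExtvf}, and specifically the identity~\eqref{ABREUequationH}, then gives $S(H^\pot)=\exv_{P,\sigma}$. Hence the extremal affine function $\exv_{P,\sigma}=A_\sigma$ is itself a constant, and evaluating the defining relation~\eqref{eq:exv} on $f\equiv 1$ pins down its value as $A_\sigma = 2\int_{\del P}\sigma\,/\int_P dx$, exactly the constant appearing in the displayed identity of this subsection.

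With $A_\sigma$ now known to be constant, I would invoke that displayed identity to write $\mbox{Fut}(P,\lab)(f) = \tfrac{2}{\int_P dx}\,\mL_{(P,\sigma)}(f)$ for every $f\in\mbox{Aff}(\kt^*,\bR)$, and then apply the Remark after Proposition~\ref{propExtvf}, which states that $\mL_{(P,\sigma)}$ vanishes on affine-linear functions. The right-hand side is therefore $0$, so $\mbox{Fut}(P,\lab)$ vanishes identically on $\mbox{Aff}(\kt^*,\bR)$, as claimed. The only step that demands any care — and hence the ``main obstacle,'' though a mild one — is the logical passage from constant scalar curvature (rather than mere extremality) to the constancy of $\exv_{P,\sigma}$; this rests squarely on the uniqueness of the extremal affine function built into Proposition~\ref{propExtvf}, after which the remainder is pure substitution into two already-established formulas.
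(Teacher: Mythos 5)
Your proof is correct and follows essentially the same route the paper intends for this corollary: constant scalar curvature forces $S(H^\pot)$ to be constant, hence equal to $\exv_{P,\sigma}$ by Proposition~\ref{propExtvf}, after which the displayed proportionality between $\mbox{Fut}(P,\lab)$ and $\mL_{(P,\sigma)}$ in the constant case, combined with the vanishing of $\mL_{(P,\sigma)}$ on affine-linear functions, yields the claim. (One harmless remark: the positive constant in that proportionality, stated in the paper as $\tfrac{2}{\int_P dx}$, actually works out to $\int_P dx$, but since only positivity of the constant matters, this does not affect your argument.)
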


 Let $\eta$ and $\lab$ be labellings for the same polytope $P$. Then, for each $s=1,\dots, d$,  $\eta_s$ and $\lab_s$ are inward to $P$ and normal to the facet $F_s$ and so there is a real number $a_s >0$ such that $$a_s\lab_s=\eta_s.$$ When restricted on $F_s$, we have $d\sigma_{\lab}= a_s d\sigma_{\eta}$. Therefore,  
 \begin{equation}\label{FUTlabLIN}\mbox{Fut}(P,\lab)(f) = \int_Pdx\sum_s a_s \int_{F_s}f d\sigma_\eta - \int_P f dx \sum_s a_s \int_{F_s}d\sigma_\eta \end{equation} and thus
   \begin{equation} \label{logFUTlab}\begin{split}
  \mbox{Fut}(P,\lab)(f) = &\mbox{Fut}(P,\eta)(f) \\ &\,\; - \int_Pdx\sum_s (1-a_s) \int_{F_s}f d\sigma_\eta + \int_P f dx \sum_s (1-a_s) \int_{F_s}d\sigma_\eta. \end{split}\end{equation} Note that, whenever $(P,\eta,\Lambda)$ is rational Delzant and thus associated to a compact toric symplectic manifold $(M,\omega,T=\kt/\Lambda)$ through the Delzant--Lerman--Tolman correspondance, the last expression coincides, up to some multiplicative positive constant, with the {\it log Futaki invariant} (relative to the torus $T$) defined in \cite{Do:cone}. Indeed, consider the case where $a_1=\beta$ and $a_s=1$ for $s=2,\dots, d$ then we recover from \eqref{logFUTlab} that 
   \begin{equation}\label{FUT=logFUT}\mbox{Fut}_{D,\beta}(\Xi_f,[\omega]) = \frac{2 (2\pi)^n\mbox{Fut}(P,\lab)(f)}{\int_M\omega^n} \end{equation} where we follow the notation of~\cite{hashimoto} with $D=x^{-1}(F_1)$. \\
  
   Observe from~\eqref{FUTlabLIN} that the vanishing of the Futaki invariant imposes linear conditions on the labelling normals.
  \begin{proposition} Given a polytope $P\subset \kt^*$ of dimension $n$ with $d$ facets, there exists a $(d-n)$--dimensional cone $\bfC(P)\subset \kt^d$ of labelling $\lab\in \bfC(P)$ such that $\emph{Fut}(P,\lab)$ vanishes identically on $\emph{Aff}(\kt^*,\bR)$.  
    \end{proposition}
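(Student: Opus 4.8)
The plan is to exploit that, by~\eqref{FUTlabLIN}, the functional $\mbox{Fut}(P,\lab)$ depends \emph{linearly} on the labelling once a reference labelling $\eta$ is fixed. Writing $a_s\lab_s=\eta_s$ with $a_s>0$, formula~\eqref{FUTlabLIN} exhibits, for each fixed $f$, the assignment $a=(a_1,\dots,a_d)\mapsto \mbox{Fut}(P,\lab)(f)$ as a linear map in $a$. First I would record the elementary fact, visible directly from~\eqref{FUTdefnP}, that $\mbox{Fut}(P,\lab)(1)=0$ for every labelling, the two terms cancelling. Hence $\mbox{Fut}(P,\lab)$ descends to a linear functional on $\mbox{Aff}(\kt^*,\bR)/\bR\cong\kt$, i.e. it is determined by its values on the $n$ coordinate functions $x_1,\dots,x_n$. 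This realises $a\mapsto\mbox{Fut}(P,\lab)$ as a linear map $\Phi\colon\bR^d\to\kt^*$, and the requirement that $\mbox{Fut}(P,\lab)$ vanish on all of $\mbox{Aff}(\kt^*,\bR)$ becomes the single condition $a\in\ker\Phi$. Since $\dim\kt^*=n$, this gives $\dim\ker\Phi\ge d-n$ for free.

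Next I would show $\Phi$ is onto, so that $\dim\ker\Phi=d-n$ exactly. Unwinding~\eqref{FUTlabLIN} with $f=x_i$ gives $\Phi(a)(x_i)=V\sum_s a_s m_s\bigl((c_s)_i-\bar x_i\bigr)$, where $c_s$ is the centroid of the facet $F_s$ (for $\sigma_\eta$, equivalently the geometric centroid), $\bar x$ is the centroid of $P$, $V=\mathrm{vol}(P)$, and the weights $m_s=\int_{F_s}\sigma_\eta$ are strictly positive. Thus $\Phi$ is onto iff the vectors $\{c_s-\bar x\}_s$ span $\kt^*$, i.e. the facet centroids affinely span $\kt^*$. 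To prove this I would use two classical identities for the outward unit normals $n_s$ and facet areas $A_s$: Minkowski's relation $\sum_s A_s n_s=0$, and the divergence identity $\sum_s A_s(c_s)_i(n_s)_j=V\delta_{ij}$ (obtained by integrating $\partial x_i/\partial x_j$ over $P$). If some nonconstant affine $g(x)=\langle v,x\rangle-t$ vanished at every $c_s$, then pairing $\langle v,c_s\rangle=t$ against $A_s n_s$ and summing yields $Vv=0$ by these two identities, forcing $v=0$, a contradiction. Hence the centroids affinely span and $\Phi$ is surjective.

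It remains to produce a genuine labelling in $\ker\Phi$, so that $\bfC(P):=\ker\Phi\cap\mN(P)$ is nonempty; being the intersection of the subspace $\ker\Phi$ with the open convex cone $\mN(P)\simeq\bR^d_{>0}$, it is then automatically an open convex cone of dimension $\dim\ker\Phi=d-n$. For nonemptiness I would decompose $P$ into the pyramids with apex $\bar x$ over its facets; comparing centroids (using that the centroid of a pyramid is $\tfrac{1}{n+1}$ apex $+\tfrac{n}{n+1}$ base centroid) gives $\bar x=\sum_s w_s c_s$ with $w_s=\mathrm{vol}(\mathrm{pyramid}_s)/V>0$ and $\sum_s w_s=1$, whence $\sum_s w_s(c_s-\bar x)=0$. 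Setting $a_s=w_s/m_s>0$ then produces a positive solution of $\Phi(a)=0$, i.e. a labelling with vanishing Futaki invariant. Alternatively, one may invoke the K\"ahler--Einstein labelling $\sigma_{KE}$ of~\cite{RKE_legendre} used in the proof of Proposition~\ref{propExistsFORMAL}: being K\"ahler--Einstein it has constant scalar curvature, so the Corollary above gives $\mbox{Fut}(P,\lab_{\sigma_{KE}})\equiv0$.

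The formal core---linearity, the automatic vanishing on constants, and the dimension count---is routine. The one genuinely geometric step, which I expect to be the main obstacle, is the surjectivity of $\Phi$, equivalently the assertion that the facet centroids of a full-dimensional polytope affinely span the ambient space; this is exactly where the Minkowski and divergence identities enter, and it is also what upgrades the conclusion from ``at least $d-n$'' to dimension \emph{precisely} $d-n$.
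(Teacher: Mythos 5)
Your proof is correct, and its skeleton coincides with the paper's: fix a reference labelling $\eta$, observe that $\mbox{Fut}(P,\lab)$ is linear in the scale parameters $a=(a_1,\dots,a_d)$ and vanishes automatically on constants, and reduce the statement to showing that the induced linear map $\Phi:\bR^d\to\kt^*\simeq\bR^n$ is onto (so its kernel has dimension exactly $d-n$) and that the kernel meets the positive quadrant. Where you genuinely differ is in how these two facts are justified, and there your argument is more complete than the paper's. For surjectivity the paper, after centering $P$ at its barycenter, only remarks that ``for any coordinate $x_i$ there is a facet on which $x_i$ is sign definite''; as literally stated this shows each of the $n$ component functionals $a\mapsto\sum_s a_s m_s (c_s)_i$ is nonzero, but not that the weighted facet centroids span $\bR^n$, i.e.\ not full rank. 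Your route through the Minkowski relation $\sum_s A_s n_s=0$ and the divergence identity $\sum_s A_s (c_s)_i (n_s)_j = V\delta_{ij}$ closes exactly this gap: it shows no hyperplane contains all facet centroids, hence they affinely span and $\Phi$ is onto. Likewise, the paper settles nonemptiness of the positive part of the kernel by ``basic considerations on barycenter''; your pyramid decomposition identity $\bar x=\sum_s w_s c_s$ with $w_s>0$ is precisely that consideration made explicit, and it produces the concrete positive kernel element $a_s=w_s/m_s$. (Your alternative via the K\"ahler--Einstein labelling of \cite{RKE_legendre} also works, but it is exactly the non-elementary route the paper says it wishes to avoid.) One small point of hygiene, shared with the paper: the linear structure lives on the measures $\sigma\in\mM(P)$, equivalently on $a$, not on the labels themselves, since $\lab_s=\eta_s/a_s$; so your $\ker\Phi\cap\mN(P)$ should be read as the set of labellings whose $a$-parameters lie in $\ker\Phi$ --- still a $(d-n)$--dimensional cone in $\kt^d$, though it is convex only in the $a$ (or $\sigma$) coordinates.
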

    
    In \cite{RKE_legendre} the last proposition follows non trivial consideratio, we give an elementary proof here.   
  \begin{proof} Put coordinates $x=(x_1,\dots,x_n)$ on $\kt^*$ and translate $P$ if necessary so that $\int_Px_i dx=0$ for any $i=1,\dots, n$. The result follows if the linear map $ \bR^d\longrightarrow \bR^n$ defined by\begin{equation}\label{eqFUTlin} \bR^d\ni a \mapsto \left(\sum_{s=1}^d a_s \left(\int_Px_i\,dx\int_{F_s} d\sigma_\lab -\int_Pdx\int_{F_s} x_i d\sigma_\lab\right)\right)_{i=1,\dots,n}\end{equation} is onto and his kernel meets the positive quadrant of $\bR^d$. With the suitable coordinate chosen the rhs of~\eqref{eqFUTlin} is up to non-zero multiplicative constant $$\left(\sum_{s=1}^d a_s\int_{F_s} x_1d\sigma_\lab,\dots, \sum_{s=1}^d a_s\int_{F_s} x_nd\sigma_\lab \right) \in \bR^n.$$ This is onto by convexity of $P$, indeed, for any coordinates $x_i$ there is a facet of $P$ on wich $x_i$ is sign definite. Basic consideration on barycenter and the observation that $0\in P$ imply that the kernel of the map~\eqref{eqFUTlin} contains an element of the positive quadrant of $\bR^d$. \end{proof}

\bibliographystyle{abbrv}

\begin{thebibliography}{}

\end{thebibliography}


\begin{thebibliography}{DDDD}


  \bibitem{abreu}
   {\scshape M. Abreu}
    \emph{K\"ahler geometry of toric varieties and extremal metrics}, Internat. J. Math. {\bf 9} (1998), 641--651.

 \bibitem{abreuOrbifold}
  {\scshape M. Abreu}
    \emph{K\"ahler metrics on toric orbifolds}, J. Differential Geom. {\bf 58} (2001), 151--187.


  \bibitem{H2FII}
   {\scshape V. Apostolov, D. M. J. Calderbank, P. Gauduchon, C. T{\o}nnesen-Friedman}
     \emph{Hamiltonian $2$--forms in K\"ahler geometry. II. Global classification}, J. Differential Geom. {\bf 68} (2004), 277--345.
     
  \bibitem{ACGTFex}
   {\scshape V. Apostolov, D. M. J. Calderbank, P. Gauduchon, C. T{\o}nnesen-Friedman}
     \emph{Hamiltonian 2-forms in Kaehler geometry III: Extremal metrics and stability}, Invent. Math. 173 (2008), 547--601.
   
    \bibitem{ACGTFproj_curves}
   {\scshape V. Apostolov, D. M. J. Calderbank, P. Gauduchon, C. T{\o}nnesen-Friedman}
     \emph{Extremal Kaehler metrics on projective bundles over a curve}, Adv. Math. 227 (2011), 2385--2424.

  \bibitem{ACG}
    {\scshape V. Apostolov, D. M. J. Calderbank, P. Gauduchon}
      \emph{Ambitoric geometry I: Einstein metrics and extremal ambikaehler structures} Journal fur die reine und angewandte Mathematik, {\bf 721} (2016), 109--147.
      
    \bibitem{ACG2}
    {\scshape V. Apostolov, D. M. J. Calderbank, P. Gauduchon}
      \emph{Ambitoric geometry II: Extremal toric surfaces and Einstein 4-orbifolds} Ann. Sci. Ecole Norm. Supp. (4) {\bf 48} (2015), 1075--1112.
      
%
%
%
%
%
%
   \bibitem{calabi}
    {\scshape E. Calabi}
    \emph{Extremal K\"ahler metrics. II.}, Differential geometry and complex analysis, I. Chavel, H. M. Farkas Eds., 95--114, Springer, Berlin, 1985.
%
  \bibitem{CDG}
   {\scshape D.M.J. Calderbank, L. David, P. Gauduchon}
     \emph{ The Guillemin formula and K\"ahler metrics on toric symplectic manifolds}, J. Symp. Geom. 1 (2003) 767--784.
%
%
%
%
%
%
%
%
%
%

\bibitem{chencheng3}
 {\scshape  X.X. Chen. J. Cheng}
\emph{On the constant scalar curvature K\"ahler metrics (III), General automorphism group} ArXiv1801.05907v1 

\bibitem{CLS}
 {\scshape B. Chen, A.-M. Li, L.Sheng}
\emph{Uniform K-stability for extremal metrics on toric varieties}, Journal of diff. equation, {\bf 257}, 5, 2014, 1487--1500


  \bibitem{darvas}
   {\scshape T. Darvas}
    \emph{The Mabuchi completion of the space of K\"ahler potentials} Amer. J. Math. 139 (2017), no. 5, 1275--1313.


  \bibitem{delzant:corres}
   {\scshape T. Delzant}
    \emph{Hamiltoniens p\'eriodiques et images convexes de l'application moment}, Bull. Soc. Math. France {\bf 116} (1988), 315--339.

 
  \bibitem{don:scalTORICstab}
     {\scshape S. K. Donaldson}
     \emph{Scalar curvature and stablity of toric varieties}, J. Differential Geom. {\bf 62} (2002), 289--342.

     
 
   \bibitem{don:interior}
    {\scshape S. K. Donaldson}
       \emph{Interior estimates for solutions of Abreu's equation}, Collect. Math. {\bf 56} (2005), 103--142.
 
    \bibitem{don:Large}
    {\scshape S. K. Donaldson}
     \emph{ K\"ahler geometry on toric manifolds, and some other manifolds with large symmetry}, Handbook of geometric analysis. No. 1, Adv. Lect. Math. (ALM) {\bf 7}, 29--75, Int. Press, Somerville, MA, 2008.
%
%
   \bibitem{don:extMcond}
     {\scshape S. K. Donaldson}
       \emph{Extremal metrics on toric surfaces: a continuity method}, J. Differential Geom. {\bf 79} (2008), 389--432.
 

  \bibitem{don:scalar}
   {\scshape S. K. Donaldson}
       \emph{Constant scalar curvature metrics on toric surfaces}, Geom. Funct. Anal. {\bf 19} (2009), 83--136.
 

  
 \bibitem{Do:cone}
    {\scshape S. K. Donaldson}
      \emph{K\"ahler metrics with cone singularities along a divisor}, Essays in mathematics and its applications, 49--79, Springer, Heidelberg, 2012.
 
 \bibitem{DuistPelayo}
    {\scshape J.J. Duistermaat, A. Pelayo}
      \emph{Reduced phase space and toric variety coordinatizations of Delzant spaces.} Math. Proc. Cambridge Philos. Soc. {\bf 146} (2009), no. 3, 695--718.
%
 \bibitem{futaki}
   {\scshape A. Futaki}
       \emph{An obstruction to the existence of Einstein K\"ahler metrics}, Invent. Math. {\bf 73} (1983), no. 3, 437--443.
%

 \bibitem{futakimabuchi}
  {\scshape A. Futaki, T. Mabuchi}
      \emph{Bilinear forms and extremal K\"ahler vector fields associated with K\"ahler classes}, Math. Ann. {\bf 301} (1995), 199--210.



%
  \bibitem{pg}
  {\scshape P. Gauduchon}
     \emph{Extremal K\"ahler metrics: An elementary inroduction}, In preparation.
%


 \bibitem{guan}
  {\scshape D. Guan}
    \emph{On modified Mabuchi functional and Mabuchi moduli space of K\"ahler metrics on toric bundles}, Math. Res. Lett. {\bf 6} (1999), 547--555.

    
  \bibitem{guillMET}
   {\scshape V. Guillemin}
     \emph{K\"ahler structures on toric varieties}, J. Diff. Geom. {\bf 40} (1994), 285--309.
%

\bibitem{hashimoto}
   {\scshape Y. Hashimoto}
      \emph{Scalar curvature and Futaki invariant of K\"ahler metrics with cone singularities along a divisor},  arXiv:math.DG/15008.02640v1. 

\bibitem{WHe}
   {\scshape W. He}
      \emph{On Calabi's extremal metric and properness},  arXiv:math.DG/1801.07636
      
%
%
%

   \bibitem{KL}
   {\scshape J. Keller, M. Lejmi}
    \emph{On the lower bounds of the $L^2$-norm of the Hermitian scalar curvature}, arxiv:math.DG./1702.01810
 
 
   \bibitem{lejmi}
   {\scshape M. Lejmi}
    \emph{Extremal almost-Kahler metrics}, Int. J. Maths {\bf 21} (2010), No. 12.
 

\bibitem{RKE_legendre}
   {\scshape E. Legendre}
     \emph{Toric K\"ahler-Einstein metrics and convex compact polytopes}, Journal of Geometric Analysis, 26  (1), 2016, 399--427. 
     
  \bibitem{TGQ}
  {\scshape E. Legendre}
     \emph{Toric geometry of convex quadrilaterals}, J. Symplectic Geom. {\bf 9} (2011), 343--385
%
%
%
  \bibitem{LT:orbiToric}
   {\scshape E. Lerman, S. Tolman}
    \emph{Hamiltonian torus actions on symplectic orbifolds and toric varieties}, Trans. Amer. Math. Soc. {\bf 349} (1997), 4201--4230.
%
%
%
   \bibitem{sektnan}
 {\scshape L.M. Sektnan}
    \emph{An investigation of stability on certain toric surfaces}, arXiv.1610.09419 [math.DG]
%

 \bibitem{szekelyhidi}
    {\scshape G. Sz\'ekelyhidi}
\emph{Extremal metrics and K-stability}  Bull. London Math. Soc. {\bf 39} (2007), 76--84 

 \bibitem{szekelyhidiTHESIS}
    {\scshape G. Sz\'ekelyhidi}
\emph{Extremal metrics and K-stability (PhD Thesis)} arXiv:math/0611002 
  \bibitem{T}
  {\scshape G. Tian}
    \emph{ K\"ahler--Einstein metrics with positive scalar curvature}, Inventiones Math. {\bf 130} No 1 (1997), 1--37.
 \bibitem{Y}
   {\scshape S.T. Yau}
     \emph{Open problems in Differential Geometry}, Proc. Symp. Pure Math., 54 (1993), 1--18.
%
 \bibitem{zz}
   {\scshape B. Zhou, X. Zhu}
     \emph{$K$--stability on toric manifolds}, Proc. Amer. Math. Soc. {\bf 136} (2008), 3301--3307.

\end{thebibliography}

\end{document}